\definecolor{darkblue}{rgb}{0,0,.7}
\newlist{alphenum}{enumerate}{1}
\setlist[alphenum]{fullwidth,label={(\alph*)}}
\theoremstyle{definition}
\newtheorem{theorem}{Theorem}[section]
\newtheorem{remark}[theorem]{Remark}
\newtheorem{lemma}[theorem]{Lemma}
\numberwithin{figure}{section}
\numberwithin{table}{section}
\numberwithin{equation}{section}
\DeclareMathOperator*{\argmin}{argmin}
\renewcommand{\vec}[1]{\boldsymbol{#1}}
\tikzset{every picture/.append style={font=\normalsize}}
\pgfplotsset{
  axis background/.style={fill=black!5!white},
  minor grid style={densely dotted,semithick},
  compat=1.14}
\tikzset{slopetriangle/.style={
  bottom color=white,
  top color=white,
  draw=black
}}
\begin{document}
\date{\today}

%% TITLE %%%%%%%%%%%%%%%%%%%%%%%%%%%%%%%%%%%%%%%%%%%%%%%
\title[divergence-free reconstruction on polygons]{Divergence-preserving reconstructions on polygons and a really pressure-robust virtual element method for the Stokes problem}

%% AUTHORS %%%%%%%%%%%%%%%%%%%%%%%%%%%%%%%%%%%%%%%%%%%%%
\author{D.~Frerichs, C.~Merdon}
\maketitle

%%%%%%%%%%%%%%%%
\begin{abstract}
Non divergence-free discretisations for the incompressible Stokes problem may suffer from a lack of pressure-robustness characterised by large discretisations errors due to irrotational forces in the momentum balance.
This paper argues that also divergence-free virtual element methods (VEM) on polygonal meshes are not really pressure-robust as long as the right-hand side is not discretised in a careful manner. To be able to evaluate the right-hand side for the
testfunctions, some explicit interpolation of the virtual testfunctions is needed that can be evaluated
pointwise everywhere. The standard discretisation via an \(L^2\)-bestapproximation does not
preserve the divergence and so destroys the orthogonality
between divergence-free testfunctions and possibly eminent gradient forces
in the right-hand side. To repair this orthogonality and restore pressure-robustness another
divergence-preserving reconstruction is suggested
based on Raviart--Thomas approximations on local subtriangulations of the polygons.
All findings are proven theoretically and are demonstrated numerically in two dimensions. 
The construction is also interesting for hybrid high-order methods on polygonal or polyhedral meshes.
\end{abstract}

%%%%%%%%%%%%%%%%%%%%%%
\section{Introduction}
Recently, the mathematical community became interested in flexible approximation methods on polygonal or polyhedral meshes. For the Stokes problem, several approaches are available, see e.g.\ \cite{MR2465455,DIPIETRO20151,MR2285842,MR3266951,MR3407240,MR3502564} and the references therein. 
One very popular and elegant approach is the virtual element method  \cite{MR3796371,BeiraodaVeiga2019} that
preserve the $H^1$-conformity and the divergence constraint of the velocity field on the discrete level on polygonal meshes. Usually, conforming divergence-free methods are also pressure-robust as any divergence-free function is orthogonal against (pressure) gradients that appear in the momentum balance \cite{MR3683678}, in particular in the right-hand side.

However, the fact that the virtual test functions are only known at the degrees of freedom complicates the discretisation
of the right-hand side. Consequently, in the context of virtual element methods the right-hand side functional
\begin{align*}
  F(\vec{v}_h) := \int_\Omega \vec{f} \cdot \vec{v}_h \, \mathrm{d}x
\end{align*}
in general cannot be evaluated exactly and has to be approximated. To do so, the information on the ansatz functions allows to compute an $\vec{L}^2$ bestapproximation $\vec{\pi}_k$ of a certain degree $k$. This leads to the approximative right-hand side
\begin{align*}
  F_h(\vec{v}_h) := \int_\Omega \vec{f} \cdot \vec{\pi}_k \vec{v}_h \, \mathrm{d}x.
\end{align*}
In the a priori error estimate for the velocity error \(\| \nabla(\vec{u}
- \vec{u}_h) \|_{L^2}\) an additional discretisation error  pops up that
can be quantified by the dual norm of \(\vec{F} - \vec{F}_h\) with respect to the divergence-free VEM subspace \(\vec{V}_{0,h}\), i.e.
\begin{align*}
  \| \vec{F} - \vec{F}_h \|_{\vec{V}_{0,h}^\star}
  := \sup_{\vec{v}_h \in \vec{V}_{0,h} \setminus \lbrace 0 \rbrace}
  \frac{F(\vec{v}_h) - F_h(\vec{v}_h)}{\| \nabla \vec{v}_h \|_{L^2}}
  & \lesssim \mathcal{O}(h^{k+2}) \vert \vec{f} \vert_{H^{k+1}}
\end{align*}
Since this consistency error enters the a priori velocity estimate with the inverse of the viscosity \(1/\nu\), the velocity error
might be large in case of large complicated pressures $p/\nu$, e.g.\ \(\vec{f} = \nabla p\) in the worst case. A pressure-robust discretisation would be pressure-independent and locking-free for \(\nu \rightarrow 0\) in the sense of \cite{Babuska:Suri:SINUM:92,MR3743746}, see \cite{MR3683678,MR3564690,MR3481034,gauger:linke:schroeder:2019} for more details on pressure-robustness and why it is important.
Although an enhanced version of the VEM achieves a discretisation error in the right-hand side of higher order, the method is only asymptotically pressure-robust for \(h \rightarrow 0\), but still can show large errors on coarser meshes which is demonstrated in the numerical examples below.

This contribution argues that uniform pressure-robustness, meaning on any mesh, can only be attained by an $\vec{H}(\mathrm{div},\Omega)$-conforming
interpolation $\Pi$ that preserves the divergence of the virtual test functions.
On triangles, such an interpolation is given by a standard Raviart-Thomas
interpolation in the spirit of \cite{MR3133522,MR3656505,MR3460110}, that also
can be evaluated for the virtual ansatz functions of \cite{MR3796371} as studied
in the master's thesis \cite{masterthesis} for order \(k=2\) and proven here for arbitrary order $k$. On polygons, the same idea can be exploited on a subtriangulation of the polygon and requires to solve small local Dirichlet boundary value problems for each virtual test function on each polygon.
This leads to the alternative right-hand side discretisation
\begin{align*}
  \vec{F}_{\mathrm{RT}_{k-1}}(\vec{v}_h)
  & := \int_\Omega \vec{f} \cdot I_{\mathrm{RT}_{k-1}} \vec{v}_h \, \mathrm{d}x\\
  & = \int_\Omega \mathbb{P} \vec{f} \cdot I_{\mathrm{RT}_{k-1}} \vec{v}_h \, \mathrm{d}x \quad \text{for } \vec{v}_h \in \vec{V}_{0,h}
\end{align*}
and the corresponding discretisation error can be estimated by
\begin{align*}
  \| \vec{F} - \vec{F}_{\mathrm{RT}_{k-1}} \|_{\vec{V}_{0,h}^\star}
  \lesssim \| h_\mathcal{T}(\mathbb{P}\vec{f} - \vec{\pi}_{k-2}(\mathbb{P}\vec{f}) )\|_{L^2}
  & \lesssim \nu \| h^k_\mathcal{T} D^{k-1} \Delta \vec{u} \|_{L^2}.
\end{align*}
Here, \(\mathbb{P} \vec{f} \in \vec{L}^2(\Omega)\) is the (divergence-free) Helmholtz projector of
\(\vec{f}\), that can be identified as \linebreak\(\mathbb{P} \vec{f} = - \nu \Delta \vec{u}\) when testing with divergence-free test functions, see \cite{alex2019pressurerobustness} for details. Surprisingly, for the virtual element method of order $k=2$, also a lowest order Raviart--Thomas interpolation \( I_{\mathrm{RT}_0}\) seems enough to preserve the optimal velocity convergence order, i.e. it holds the estimate
\begin{align*}
  \| \vec{F} - \vec{F}_{\mathrm{RT}_{0}} \|_{\vec{V}_{0,h}^\star}
  \lesssim \| h^2_\mathcal{T} \mathrm{curl} (\vec{f}) \|_{L^2}
\end{align*}
but at the price that the pressure error converges only suboptimally with order \(1\). The proof employs techniques from \cite{Lederer2019}.

Finally, we want to stress that the design of the reconstruction operator can be transferred also in the setting of hybrid high order methods on general meshes \cite{DIPIETRO20151} which can be seen as a generalisation of the design in \cite{MR3502564} on simplicial meshes. This observation together with other conclusions are reported at the end of the paper.
Also, although all results are stated in two dimensions, everything can be extended to three dimensions in a straightforward way.

\medskip
The rest of the paper is organised as follows. Section~\ref{sec:Preliminaries} introduces the Stokes model problem and some preliminaries.
Section~\ref{sec:VEM} discusses the classical virtual element discretisation and some improvements invented by the VEM community that already help to repair the lack of pressure-robustness to a certain extent.
Section~\ref{sec:ReconstructionPolygons} observes and proves that
a pressure-robust discretisation on shape-regular polygons is possible with the
help of Raviart--Thomas interpolations which can be computed despite the virtuality of the VEM testfunctions.
% Section~\ref{sec:ReconstructionPolygons} explains how the idea is extended to polygons as this is one of the selling points of the virtual element methods.
% The construction is detailed for a VEM of order \(k=2\) and suitable Raviart--Thomas reconstructions of order \(1\) or \(0\).
The resulting pressure-robust a priori estimates are shown in
Section~\ref{sec:aprioriprobust} as well as the surprising fact that also
a standard Raviart--Thomas interpolation of lower order is enough to keep the
optimal order of convergence for the velocity error.
Section~\ref{sec:Numerics} shows some numerical examples that confirm the theoretical results.
Finally, Section~\ref{sec:Outlook} discusses some generalisations and the relevance of the reconstruction operator for the full Navier-Stokes problem.

\section{Preliminaries}\label{sec:Preliminaries}
This section recalls the Stokes model problem and the Helmholtz--Hodge projector which is an important tool to explain pressure-robustness and
to derive pressure-robust error estimates.

\subsection{Stokes model problem}
Consider some two dimensional Liptschitz domain \(\Omega\)
with boundary \(\partial \Omega\).
The Stokes equations seek some velocity field \(\vec{u} \in \vec{H}^1_0(\Omega)\) and some pressure
field \(p \in L^2_0(\Omega) := \lbrace q \in L^2(\Omega) : \int_\Omega q \, \mathrm{d}x = 0\rbrace \) such that
\begin{align*}
  - \nu \Delta \vec{u} + \nabla p = \vec{f},\quad \text{and}\quad
  \mathrm{div}\,\vec{u} = 0 \quad\quad \text{in }\Omega
\end{align*}
for some given right-hand side \(\vec{f} \in \vec{L}^2(\Omega)\) and positive viscosity \(\nu > 0\).

The weak solution is characterised by
\begin{align*}
  a(\vec{u},\vec{v}) + b(p,\vec{v}) & = F(v) && \text{for all } \vec{v} \in \vec{H}^1_0(\Omega),\\
  					   b(q,\vec{u}) & = 0    && \text{for all } q \in L^2_0(\Omega)
\end{align*}
where
\begin{align*}
  a(\vec{u},\vec{v}) &:= \nu \int_\Omega \nabla \vec{v} : \nabla \vec{u} \, \mathrm{d}x,\\
  b(q,\vec{v}) &:= -\int_\Omega q \mathrm{div} \vec{v} \, \mathrm{d}x,\\
  F(\vec{v}) &:= \int_\Omega \vec{f} \cdot \vec{v} \, \mathrm{d}x.
\end{align*}

From standard saddle point theroy (see e.g.\ \cite{MR3097958}) it
is well known that it exists a unique solution
$(\vec{u},p)\in \vec{H}^1_0(\Omega)\times L^2_0(\Omega)$
to the Stokes equations.

\subsection{Helmholtz--Hodge projector and pressure-robustness}
Recall the $\vec{L}^2$-orthogonal Helmholtz--Hodge decomposition
(see e.g.\ \cite{GirRav-nse})
that decomposes any vector field $\vec{f} \in \vec{L}^2(\Omega)$ uniquely
into 
\begin{align} \label{eqn:HHLdecomposition}
  \vec{f} = 
  %\nabla \alpha + \bm{\beta} =: 
  \nabla \alpha + \mathbb{P}(\vec{f}),
\end{align}
where $\alpha \in H^1(\Omega)/\mathbb{R}$, and
\[
\mathbb{P}(\vec{f}) %:= \bm{\beta} 
\in \vec{L}^2_\sigma(\Omega) := \lbrace \vec{w} \in \vec {L}^2(\Omega) : 
(\nabla q, \vec{w}) = 0 \text{ for all } q \in H^1(\Omega) \rbrace.
\]
The latter one is called the Helmholtz--Hodge projector $\mathbb{P}(\vec{f})$
of $\vec{f}$ and is divergence-free. Also note that
\(\mathbb{P}(\nabla q) = 0\) for any \(q \in H^1(\Omega)\).

On the continuous level the \(\nabla \alpha\) part of the right-hand side in the momentum balance of the Stokes equations goes into the pressure \(p\), whereas the Helmholtz-projector determines the velocity. Pressure-robust discretisations respect this balance
and avoid an influence of \(\alpha\) on the velocity \cite{MR3683678,MR3564690,alex2019pressurerobustness}.

Therefore, a pressure-robust discretisation is characterised by a velocity error that is independent of the exact pressure.

\section{Virtual element methods for the Stokes problem}\label{sec:VEM}
This section introduces some notation and the setup of the virtual element method for the Stokes problem as given in \cite{MR3626409}. The last two subsections comment on known a priori estimates and an enhanced version of \cite{MR3796371} that improves the disretisation error of the right-hand side without healing
the lack of pressure-robustness completely.

\subsection{Mesh notation and assumptions}
Throughout the paper, \(\mathcal{T}\) denotes a
decomposition of the domain \(\Omega\subset \mathbb{R}^2\) into
non-overlapping simple polygons \(K\) with
\begin{align*}
  h_K:=\mathrm{diam}(K) \quad\text{and}\quad h:=
\sup_{K\in \mathcal{T}_h} h_K.
\end{align*}
Moreover, \(\mathcal{E}\) denotes the set of faces of the decomposition
$\mathcal{T}$ and \(\mathcal{E}(K)\)
denotes the set of faces of a polygon \(K \in \mathcal{T}\).

For simplicity, \(\mathcal{T}\) is supposed to fulfill the following standard shape regularity
properties, see e.g.\ \cite{BasicVEM,MR3626409}:
There exist two positive constants $\gamma_1,\gamma_2\in\mathbb{R}$, such that each \(K\in\mathcal{T}\) satisfies the assumptions
\begin{itemize}
  \item[(A1)] \(K\) is star-shaped with respect to a ball of radius larger or
    equal to $\gamma_1\,h_K$,
  \item[(A2)] the distance between any two vertices of $K$ is larger or equal to
    $\gamma_2\,h_K$.
\end{itemize}
As usual this shape regularity properties can be weakend a little, see
\cite{BasicVEM,MR3626409}.

References to convergence rates in this paper always are meant with respect to a series of decompositions with uniformly bounded \(\gamma_1,\gamma_2\).
Constants hidden in \(\lesssim\) may depend on these bounds but not on \(h\).

\subsection{Virtual element method}
The virtual element method (VEM) for solving the Stokes problem given in \cite{MR3626409} shall serve as
a starting point for the new pressure-robust version.

For a fixed integer $k\in\mathbb{N}$, on each element $K\in\mathcal{T}$ the local
virtual element spaces are defined by
\begin{align*}
  \vec{V}_h^K & :=\Big\{\,\vec{v}_h\in\vec{H}^1(K)\,:\,\vec{v}_{h}|_{\partial K}\in
    \vec{C}^0(\partial K),\,\vec{v}_{h}|_{E}\in\vec{P}_k(K)\text{ for all
  }E\in\mathcal{E}(K), \\
 & \qquad \qquad -\nu\Delta\vec{v}_h +\nabla
s\in\mathcal{G}_{k-2}(T)^\perp\text{ for some }s\in
L^2(K),\,\mathrm{div}\,\vec{v}_h\in {P}_{k-1}(K)
\,\Big\},\\
 Q_h^K& :=P_{k-1},
\end{align*}
where $P_{k}(K)$ and $\vec{P}_k(K)$ denote the scalar-valued and vector-valued
polynomials of degree at most $k$ on $K$, respectively, and $\mathcal{G}_{k-2}(K)^\perp\subset \vec{P}_{k-2}(K)$ is the $\vec{L}^2$-orthogonal
complement to $\nabla P_{k-1}(K)$.
This means that every vector valued polyonomial $\vec{q}_{k-2}$ of degree at most
$k-2$ can be decomposed into
a gradient and an orthogonal part, i.e.
\begin{align}\label{eq:polyDecomp}
  \vec{q}_{k-2} = \nabla r_{k-1} + \vec{s}_{k-2}^\perp,
\end{align}
where $r_{k-1}\in P_{k-1}$ and $\vec{s}_{k-2}^\perp \in \mathcal{G}_{k-2}^\perp$.

For a given function $\vec{v}_h\in\vec{V}_h^K$ the following degrees of freedom
are chosen:
\begin{itemize}
  \item \textbf{D\textsubscript{V}1}: the values of $\vec{v}_h$ at the vertices
    of the polygon $K$,
  \item \textbf{D\textsubscript{V}2}: the values of $\vec{v}_h$ at $k-1$
    disctinct internal points of every edge $E\in\mathcal{E}(K)$,
  \item \textbf{D\textsubscript{V}3}: the moments
    \begin{align*}
      \int_K \vec{v}_h\cdot \vec{g}_{k-2}^\perp\,\mathrm{d}x  \quad \quad \text{for all
      }\vec{g}_{k-2}^\perp\in\mathcal{G}_{k-2}^\perp,
    \end{align*}
  \item \textbf{D\textsubscript{V}4}: the moments
    \begin{align*}
      \int_K \mathrm{div}\,\vec{v}_h\,q_{k-1}\,\mathrm{d}x  \quad \quad \text{for all
      }q_{k-1}\in {P}_{k-1}/\mathbb{R}.
    \end{align*}
\end{itemize}

In addition to that, the local pressure $q_h\in Q_h^K$ is defined by the degrees of freedom
\begin{itemize}
  \item \textbf{D\textsubscript{Q}}: the moments
    \begin{align*}
      \int_K q_h\,r_{k-1}\,\mathrm{d}x\quad\quad\text{for all }r_{k-1}\in P_{k-1}(K).
    \end{align*}
\end{itemize}

\begin{lemma}
  The degrees of freedom \textbf{D\textsubscript{V}} and
  \textbf{D\textsubscript{Q}} are unisolvent for the virtual space $\vec{V}_h^K$ and
  $Q_h^K$, respectively.
\end{lemma}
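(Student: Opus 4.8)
The plan is to treat the two spaces separately, the pressure space being immediate. For $Q_h^K=P_{k-1}(K)$ I would simply note that \textbf{D\textsubscript{Q}} consists of exactly $\dim P_{k-1}(K)$ functionals, and that $\int_K q_h r_{k-1}\,\mathrm{d}x=0$ for all $r_{k-1}\in P_{k-1}(K)$ forces $q_h=0$ upon choosing $r_{k-1}=q_h$. Hence the work lies with $\vec{V}_h^K$.

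For $\vec{V}_h^K$ I would run the classical two-step VEM argument. First I would reinterpret the defining conditions as a local Stokes problem: reading ``$-\nu\Delta\vec{v}_h+\nabla s\in\mathcal{G}_{k-2}(K)^\perp$ for some $s\in L^2(K)$'' in the weak sense, a function lies in $\vec{V}_h^K$ if and only if the pair $(\vec{v}_h,s)$ solves
\begin{align*}
  -\nu\Delta\vec{v}_h+\nabla s=\vec{q}^\perp,\quad \mathrm{div}\,\vec{v}_h=\rho\ \text{ in }K,\qquad \vec{v}_h=\vec{g}\ \text{ on }\partial K,
\end{align*}
for a triple $(\vec{g},\rho,\vec{q}^\perp)$, where $\vec{g}$ is continuous with $\vec{g}|_E\in\vec{P}_k(E)$ on every $E\in\mathcal{E}(K)$, $\rho\in P_{k-1}(K)$ with $\int_K\rho\,\mathrm{d}x=\int_{\partial K}\vec{g}\cdot\vec{n}\,\mathrm{d}s$, and $\vec{q}^\perp\in\mathcal{G}_{k-2}(K)^\perp$; here $\vec{q}^\perp$ is uniquely determined by $\vec{v}_h$ since changing $s$ only alters a $\nabla P_{k-1}(K)$-component, which is orthogonal to $\mathcal{G}_{k-2}(K)^\perp$. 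Because $K$ is a Lipschitz domain satisfying (A1) and (A2), this Stokes problem is well posed with an inf--sup constant depending only on $\gamma_1,\gamma_2$, so every admissible triple yields a unique $\vec{v}_h\in\vec{V}_h^K$ and, via $\vec{g}=\vec{v}_h|_{\partial K}$, $\rho=\mathrm{div}\,\vec{v}_h$ and the $\vec{q}^\perp$ above, conversely every element of $\vec{V}_h^K$ comes from a unique triple. This identifies $\vec{V}_h^K$ linearly with the space of admissible triples, whence $\dim\vec{V}_h^K=2k\,\#\mathcal{E}(K)+\big(\dim P_{k-1}(K)-1\big)+\dim\mathcal{G}_{k-2}(K)^\perp$, which matches the count of \textbf{D\textsubscript{V}1}--\textbf{D\textsubscript{V}4} term by term (the $-1$ reflecting that \textbf{D\textsubscript{V}4} ranges over $P_{k-1}/\mathbb{R}$, the mean of $\mathrm{div}\,\vec{v}_h$ being already fixed by \textbf{D\textsubscript{V}1}, \textbf{D\textsubscript{V}2} via Gauss' theorem). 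It therefore suffices to show injectivity of the degree-of-freedom map.

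So, secondly, suppose $\vec{v}_h\in\vec{V}_h^K$ annihilates all of \textbf{D\textsubscript{V}}. From \textbf{D\textsubscript{V}1} and \textbf{D\textsubscript{V}2}, on each edge $\vec{v}_h|_E\in\vec{P}_k(E)$ vanishes at the two endpoints and at $k-1$ further distinct nodes, hence $\vec{v}_h|_{\partial K}=0$ by continuity. Then $\int_K\mathrm{div}\,\vec{v}_h\,\mathrm{d}x=\int_{\partial K}\vec{v}_h\cdot\vec{n}\,\mathrm{d}s=0$, and together with \textbf{D\textsubscript{V}4} all moments of $\mathrm{div}\,\vec{v}_h\in P_{k-1}(K)$ vanish, so $\mathrm{div}\,\vec{v}_h=0$. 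Testing the weak equation $-\nu\Delta\vec{v}_h+\nabla s=\vec{q}^\perp$ with $\vec{v}_h$ itself, the boundary term drops ($\vec{v}_h|_{\partial K}=0$), the pressure term drops ($\mathrm{div}\,\vec{v}_h=0$), and $\int_K\vec{q}^\perp\cdot\vec{v}_h\,\mathrm{d}x=0$ by \textbf{D\textsubscript{V}3} because $\vec{q}^\perp\in\mathcal{G}_{k-2}(K)^\perp$; hence $\nu\|\nabla\vec{v}_h\|_{L^2(K)}^2=0$ and, with the zero trace, $\vec{v}_h=0$.

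The hard part will be the first step. One must give a precise weak meaning to ``$-\nu\Delta\vec{v}_h+\nabla s\in\mathcal{G}_{k-2}(K)^\perp$'' — it silently encodes that the distribution $-\nu\Delta\vec{v}_h+\nabla s$ is represented by an $\vec{L}^2$-, in fact polynomial, function — and one needs well-posedness of the Stokes problem on the single polygon $K$ with stability constants controlled purely by the shape-regularity parameters (via a Bogovski{\u\i}-type right inverse of the divergence on star-shaped domains). The integration by parts used in the second step has to be justified in the same weak framework. Everything else (edge and vertex counts, the appearance of $P_{k-1}/\mathbb{R}$) is routine bookkeeping.
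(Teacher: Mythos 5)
Your argument is correct and is essentially the standard unisolvency proof that the paper defers to (it only cites Proposition~3.1 of the reference rather than proving the lemma itself): identify $\vec{V}_h^K$ with the data $(\vec{g},\rho,\vec{q}^\perp)$ of a well-posed local Stokes problem to get the dimension count, then kill a function with vanishing degrees of freedom by the energy argument using the zero trace, the vanishing divergence and \textbf{D\textsubscript{V}3}. The points you flag as requiring care (the weak meaning of the defining condition and the well-posedness of the local Stokes problem on a star-shaped polygon) are exactly the ones handled in the cited reference, so no further comparison is needed.
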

\begin{proof}
  See Proposition 3.1 in \cite{MR3626409}.
\end{proof}

The global virtual element spaces are defined as
\begin{align*}
  \vec{V}_h&:=\Big\{\,\vec{v}_h\in\vec{H}^1_0(\Omega) \,:\,
\vec{v}_{h}|_{K}\in\vec{V}_h^K\quad\text{for all }K\in\mathcal{T}\,\Big\}\\
    Q_h &:= \Big\{\, q_h\in L^2(\Omega) \,:\, q_{h}|_{K}\in Q_h^K\quad \text{for
    all }K\in \mathcal{T} \,\Big\},
\end{align*}
with global degrees of freedom as the collection of the local ones, with
appropriate continuity of facial degrees of freedom \textbf{D\textsubscript{V}1}
and \textbf{D\textsubscript{V}2} across polygonal boundaries.

Next, discrete bilinearforms are chosen.
For this purpose, on each $K\in\mathcal{T}$ the energy projection
$\Pi_k^{\nabla,K}:\vec{V}_h^K\rightarrow \vec{P}_k(K)$ is needed,
defined as solution of 
\begin{align*}
  a(\vec{q}_h,\vec{v}_h-\Pi_h^{\nabla,K}\vec{v}_h) &= 0 \quad \quad \text{for all
  }\vec{q}_h\in\vec{P}_k(K)\\
  \vec{\pi}_0(\vec{v}_h-\Pi_h^{\nabla,K}\vec{v}_h) &= 0,
\end{align*}
where \(\vec{\pi}_k\) denotes the piecewise bestapproximation into
  the polyonomials \(\boldsymbol{P}_k\), and locally
$a^K(\vec{u}_h,\vec{v}_h):= \nu\int_K
\nabla\vec{u}_h:\nabla\vec{v}_h\,\mathrm{d}x$ for all
$\vec{u}_h,\vec{v}_h\in\vec{V}_h^K$.

As shown in \cite{MR3626409} the projection $\Pi_k^{\nabla,K}\vec{v}_h$ of any
virtual function $\vec{v}_h\in\vec{V}_h^K$ can be computed using only the degrees of
freedom and it holds the Poincar\'e inequality
\begin{align} \label{eq:approxprop_gradientprojection}
  \| \vec{v}_h - \Pi_h^{\nabla,K}\vec{v}_h \|_{L^2(K)}
  \lesssim h_K \| \nabla \vec{v}_h \|_{L^2(K)}.
\end{align}

The discrete bilinear forms
$a_h^P:\vec{V}_h^K\times\vec{V}_h^K\rightarrow \mathbb{R}$ and
$b_h^P:Q_h^K\times\vec{V}_h^K\rightarrow\mathbb{R}$ are defined by
\begin{align*}
  a_h^K(\vec{u}_h,\vec{v}_h) & :=a\left(\Pi_k^{\nabla,K}\vec{u}_h,\Pi_k^{\nabla,K}\vec{v}_h\right)
    +\nu \mathcal{S}^K\left((I-\Pi_k^{\nabla,K})\vec{u}_h,(I-\Pi_k^{\nabla,K})\vec{v}_h\right),\\
  b_h^K(q_h,\vec{v}_h) & :=b^K(q_h,\vec{v}_h):=
  \int_K q_h\mathrm{div}\vec{v}_h\,\mathrm{d}x
\end{align*}
for all $\vec{u}_h,\vec{v}_h\in\vec{V}_h^K, q_h\in Q_h^K$, where
$S^K:\vec{V}_h^K\times\vec{V}_h^K\rightarrow \mathbb{R}$ is some stability
bilinear form. Possible choices for the stability bilinear form are given for
instance in \cite{ar:stability}. Since the choice of the stability
bilinear form does not matter for our purpose, we simply use
the vector product of the evaluations of the degrees of freedoms
\begin{align*}
  \mathcal{S}^K\left(\vec{u}_h,\vec{v}_h\right)
  = \textbf{D\textsubscript{V}}(\vec{u}_h) \cdot \textbf{D\textsubscript{V}}(\vec{v}_h).
\end{align*}

The global bilinearforms $a_h(\cdot,\cdot)$ and $b_h(\cdot,\cdot)$ are the sums over the local contributions.
The 'classical' discretisation of the VEM (see e.g.\ \cite{MR3626409}) right-hand
side reads
\begin{align*}
  \vec{F}_h(\vec{v}_h) := \int_\Omega \vec{\pi}_{k-2} \vec{f} \cdot \vec{v}_h
  = \int_\Omega \vec{f} \cdot \vec{\pi}_{k-2} \vec{v}_h
\end{align*}
where \(\vec{\pi}_{k-2}\) is the piecewise \(\vec{L}^2\)-bestapproximtion onto the vector-valued polynomials of degree \(k-2\). Later, alternative (pressure-robust) discretisations are introduced. However, we first turn our focus on the
possible a priori error estimates one obtains with this classical choice.

It can be easily checked that all the bilinear forms and the projections can be
evaluated only with the degrees of freedom, see e.g.\ \cite{DASSI2019} for details.
Therefore, the discrete problem reads as follows: Find
$(\vec{u}_h,p_h)\in\vec{V}_h\times Q_h$ such that
\begin{align*}
  a_h(\vec{u}_h,\vec{v}_h) + b_h(p_h,\vec{v}_h) &= F(\vec{v}_h)&&\text{for all
  }\vec{v}_h\in\vec{V}_h\\
  b_h(q_h,\vec{u}_h) &= 0 && \text{for all }q_h\in Q_h.
\end{align*}
The discrete problem has a unique (but virtual) solution which is pointwise divergence free \cite{MR3626409}.

\subsection{A priori error estimates}

This section recalls a priori error estimates for the velocity and pressure of the VEM. To focus on the discretisation error of the right-hand side consider the following dual norms
\begin{align*}
\| \vec{F} - \vec{F}_h \|_{\vec{V}_{0,h}^\star}
  & := \sup_{\vec{v}_h \in \vec{V}_{0,h} \setminus \lbrace 0 \rbrace}
  \frac{F(\vec{v}_h) - F_h(\vec{v}_h)}{\| \nabla \vec{v}_h \|_{L^2}},\\
\| \vec{F} - \vec{F}_h \|_{\vec{V}_{h}^\star}
  & := \sup_{\vec{v}_h \in \vec{V}_{h} \setminus \lbrace 0 \rbrace}
  \frac{F(\vec{v}_h) - F_h(\vec{v}_h)}{\| \nabla \vec{v}_h \|_{L^2}}.
\end{align*}
Here \(\vec{V}_{0,h} := \lbrace \vec{v}_h \in \vec{V}_h \, : \, \mathrm{div} (\vec{v}_h) = 0 \rbrace\) denotes the subspace of divergence-free virtual
functions. The first dual norm refers to
testing only with divergence-free velocity test functions and the second dual norm
to testing with arbitrary ones that appear in a priori pressure estimates.

\begin{theorem}[A priori estimates]\label{thm:apriori_full}
  Under sufficient regularity assumptions on \(\vec{u}\) and \(p\), there holds
        \begin{align*}
          \| \nabla(\vec{u} - \vec{u}_h) \|_{L^2} & \lesssim \inf_{\vec{v}_h \in
          \vec{V}_{h}} \| \nabla( \vec{u} - \vec{v}_h) \|_{L^2}
          + \inf_{\vec{v}_h \in \vec{P}_k(\mathcal{T})} \| \nabla_h(\vec{u}
          - \vec{v}_h) \|_{L^2} + \frac{1}{\nu} \| \vec{F} - \vec{F}_h \|_{\vec{V}_{0,h}^\star}\\
          \| p - p_h \|_{L^2} & \lesssim \inf_{q_h \in Q_h} \| p - q_h \|_{L^2}
          + \nu \inf_{\vec{v}_h \in \vec{V}_{h}} \| \nabla( \vec{u} - \vec{v}_h)
          \|_{L^2} + \nu \inf_{\vec{v}_h \in \vec{P}_k(\mathcal{T})} \| \nabla_h(\vec{u} - \vec{v}_h) \|_{L^2} \\
          & \qquad \qquad + \| \vec{F} - \vec{F}_h \|_{\vec{V}_{h}^\star}
        \end{align*}
        where \(\nabla_h\) is the piecewise gradient with respect to \(\mathcal{T}\).
        Since the bestapproximtions converge optimally (\cite{MR3626409, BasicVEM}), the VEM has the optimal velocity and pressure convergence order $k$ whenever the consistency errors of the right-hand side discretisation is of the right order.
\end{theorem}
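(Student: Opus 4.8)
The plan is to run an abstract Strang-type argument for the mixed virtual element scheme, carefully tracking the dependence on $\nu$ throughout. The structural facts I would use, all available from \cite{MR3626409,BasicVEM}, are: (i) polynomial consistency of $a_h$, i.e.\ $a_h^K(\vec{q}_k,\vec{v}_h)=a^K(\vec{q}_k,\vec{v}_h)$ for every $\vec{q}_k\in\vec{P}_k(K)$ and $\vec{v}_h\in\vec{V}_h^K$; (ii) stability and continuity of $a_h$, $\nu\|\nabla\vec{v}_h\|_{L^2(K)}^2\lesssim a_h^K(\vec{v}_h,\vec{v}_h)\lesssim\nu\|\nabla\vec{v}_h\|_{L^2(K)}^2$, with constants depending on $\gamma_1,\gamma_2$ only through the chosen $\mathcal S^K$; (iii) the discrete inf-sup condition $\beta\|q_h\|_{L^2}\le\sup_{\vec{v}_h\in\vec{V}_h\setminus\{0\}}b_h(q_h,\vec{v}_h)/\|\nabla\vec{v}_h\|_{L^2}$ together with a divergence-preserving (Fortin-type) interpolation, which uses $\mathrm{div}\,\vec{V}_h^K=P_{k-1}(K)$ and the matching pressure space to show that the best approximation of $\vec{u}$ in $\vec{V}_{0,h}$ is comparable to its best approximation in $\vec{V}_h$. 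I would also use at the outset that $\vec{u}_h$ is pointwise divergence-free, hence $\vec{u}_h\in\vec{V}_{0,h}$, and that $b_h=b$ is exact.

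For the velocity estimate I would pick a near-best $\vec{v}_h\in\vec{V}_{0,h}$ approximating $\vec{u}$, set $\vec{e}_h:=\vec{v}_h-\vec{u}_h\in\vec{V}_{0,h}$, and start from coercivity: $\nu\|\nabla\vec{e}_h\|_{L^2}^2\lesssim a_h(\vec{e}_h,\vec{e}_h)=a_h(\vec{v}_h,\vec{e}_h)-a_h(\vec{u}_h,\vec{e}_h)$. Since $\vec{e}_h$ is divergence-free, testing the discrete momentum equation (with the discretised right-hand side $\vec{F}_h$) gives $a_h(\vec{u}_h,\vec{e}_h)=\vec{F}_h(\vec{e}_h)$, and testing the continuous one gives $a(\vec{u},\vec{e}_h)=F(\vec{e}_h)$. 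Adding and subtracting $a(\vec{v}_h,\vec{e}_h)$ and $F(\vec{e}_h)$ leaves the three terms $a_h(\vec{v}_h,\vec{e}_h)-a(\vec{v}_h,\vec{e}_h)$, $a(\vec{v}_h-\vec{u},\vec{e}_h)$, and $(F-\vec{F}_h)(\vec{e}_h)$. The first is an element-wise consistency term: inserting an arbitrary $\vec{q}_k\in\vec{P}_k(\mathcal T)$, polynomial consistency (i) replaces $\vec{v}_h$ by $\vec{v}_h-\vec{q}_k$ in both forms, and (ii) bounds it by $\nu\big(\|\nabla(\vec{v}_h-\vec{u})\|_{L^2}+\|\nabla_h(\vec{u}-\vec{q}_k)\|_{L^2}\big)\|\nabla\vec{e}_h\|_{L^2}$; the second term is bounded by $\nu\|\nabla(\vec{u}-\vec{v}_h)\|_{L^2}\|\nabla\vec{e}_h\|_{L^2}$, the third by $\|\vec{F}-\vec{F}_h\|_{\vec{V}_{0,h}^\star}\|\nabla\vec{e}_h\|_{L^2}$. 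Dividing by $\nu\|\nabla\vec{e}_h\|_{L^2}$, using the triangle inequality with $\vec{u}-\vec{v}_h$, and taking infima over $\vec{v}_h$ and $\vec{q}_k$ — where (iii) turns $\inf_{\vec{V}_{0,h}}$ into $\inf_{\vec{V}_h}$ — produces the first bound.

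For the pressure I would invoke the discrete inf-sup condition (iii): for arbitrary $q_h\in Q_h$, $\beta\|p_h-q_h\|_{L^2}\lesssim\sup_{\vec{v}_h}b_h(p_h-q_h,\vec{v}_h)/\|\nabla\vec{v}_h\|_{L^2}$. Writing $b_h(p_h,\vec{v}_h)=\vec{F}_h(\vec{v}_h)-a_h(\vec{u}_h,\vec{v}_h)$, using the continuous momentum equation $b(p,\vec{v}_h)=F(\vec{v}_h)-a(\vec{u},\vec{v}_h)$ (valid since $\vec{v}_h\in\vec{H}^1_0(\Omega)$), and using $b_h=b$, one gets $b_h(p_h-q_h,\vec{v}_h)=(\vec{F}_h-F)(\vec{v}_h)+\big(a(\vec{u},\vec{v}_h)-a_h(\vec{u}_h,\vec{v}_h)\big)+b(p-q_h,\vec{v}_h)$. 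The middle term splits into $a(\vec{u}-\vec{u}_h,\vec{v}_h)$ plus the consistency term $a(\vec{u}_h,\vec{v}_h)-a_h(\vec{u}_h,\vec{v}_h)$, handled exactly as above (insert $\vec{q}_k$, use (i)–(ii)), altogether bounded by $\nu\big(\|\nabla(\vec{u}-\vec{u}_h)\|_{L^2}+\inf_{\vec{q}_k}\|\nabla_h(\vec{u}-\vec{q}_k)\|_{L^2}\big)\|\nabla\vec{v}_h\|_{L^2}$; the remaining two terms are bounded by $\|\vec{F}-\vec{F}_h\|_{\vec{V}_h^\star}\|\nabla\vec{v}_h\|_{L^2}$ and $\|p-q_h\|_{L^2}\|\nabla\vec{v}_h\|_{L^2}$. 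Dividing by $\|\nabla\vec{v}_h\|_{L^2}$, inserting the already proven velocity estimate, and taking the infimum over $q_h$ yields the second bound. The routine parts are the add-and-subtract bookkeeping and the element-wise consistency estimates; the point where one must be careful is the $\nu$-bookkeeping, since $a_h\gtrsim\nu\|\nabla\cdot\|^2$ forces the right-hand side consistency term to appear divided by $\nu$ in the velocity estimate but at order one in the pressure estimate — which is precisely why the two dual norms $\|\vec{F}-\vec{F}_h\|_{\vec{V}_{0,h}^\star}$ and $\|\vec{F}-\vec{F}_h\|_{\vec{V}_h^\star}$ enter with the stated powers of $\nu$. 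The closing sentence of the statement is then immediate from the known optimal-order best-approximation estimates for the VEM spaces \cite{MR3626409,BasicVEM}.
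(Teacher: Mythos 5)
Your proposal is correct and follows essentially the same route as the paper, which simply defers to the standard Strang-type analysis of \cite{MR3626409,BasicVEM} with the instruction to ``adapt to dual norms''; your argument is precisely that adaptation (polynomial consistency plus stability of $a_h$, the inf-sup/Fortin argument to pass from $\vec{V}_{0,h}$ to $\vec{V}_h$ in the velocity best approximation, and the careful $\nu$-bookkeeping that places $\|\vec{F}-\vec{F}_h\|_{\vec{V}_{0,h}^\star}/\nu$ in the velocity bound and $\|\vec{F}-\vec{F}_h\|_{\vec{V}_h^\star}$ at order one in the pressure bound). You also correctly read the discrete problem as having $F_h$ rather than $F$ on the right-hand side, which is what the theorem statement requires.
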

\begin{proof}
  See \cite{MR3626409, BasicVEM} and adapt to dual norms.
\end{proof}

\begin{lemma}[Right-hand side discretisation consistency error]
The consistency errors of the classical right-hand side discretisation are bounded by
\begin{align*}
  \| \vec{F} - \vec{F}_h \|_{\vec{V}_{0,h}^\star}
  \leq \| \vec{F} - \vec{F}_h \|_{\vec{V}_{h}^\star}
  \lesssim \| h_\mathcal{T} (\vec{f} - \vec{\pi}_{k-2} \vec{f}) \|_{L^2}
  \lesssim \| h_\mathcal{T}^k \mathrm{D}^{k-1} \vec{f} \|_{L^2}
\end{align*}
where the last estimate requires \(\vec{f} \in \vec{H}^{k-1}(\Omega)\) and \(\mathrm{D}^{k-1}\) collects all derivatives of order \(k-1\).
\end{lemma}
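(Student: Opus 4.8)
The plan is to exploit the divergence-free structure of the test functions together with a standard Bramble--Hilbert argument localised on each polygon. First I would write out the difference of the two functionals for an arbitrary $\vec{v}_h \in \vec{V}_h$ (which contains $\vec{V}_{0,h}$), namely
\begin{align*}
  F(\vec{v}_h) - F_h(\vec{v}_h)
  = \int_\Omega \vec{f}\cdot\bigl(\vec{v}_h - \vec{\pi}_{k-2}\vec{v}_h\bigr)\,\mathrm{d}x
  = \sum_{K\in\mathcal{T}} \int_K \vec{f}\cdot\bigl(\vec{v}_h - \vec{\pi}_{k-2}\vec{v}_h\bigr)\,\mathrm{d}x,
\end{align*}
using the symmetry $\int_\Omega \vec{\pi}_{k-2}\vec{f}\cdot\vec{v}_h = \int_\Omega \vec{f}\cdot\vec{\pi}_{k-2}\vec{v}_h$ which is just the defining property of the $\vec{L}^2$-bestapproximation. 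The key observation is that on each $K$ the term $\vec{v}_h - \vec{\pi}_{k-2}\vec{v}_h$ is $\vec{L}^2(K)$-orthogonal to $\vec{P}_{k-2}(K)$, so we may subtract from $\vec{f}$ its own piecewise bestapproximation $\vec{\pi}_{k-2}\vec{f}$ for free:
\begin{align*}
  \int_K \vec{f}\cdot\bigl(\vec{v}_h - \vec{\pi}_{k-2}\vec{v}_h\bigr)\,\mathrm{d}x
  = \int_K \bigl(\vec{f} - \vec{\pi}_{k-2}\vec{f}\bigr)\cdot\bigl(\vec{v}_h - \vec{\pi}_{k-2}\vec{v}_h\bigr)\,\mathrm{d}x.
\end{align*}

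Next I would apply Cauchy--Schwarz on each $K$ and sum, obtaining the bound by $\|\vec{f}-\vec{\pi}_{k-2}\vec{f}\|_{L^2(K)}\,\|\vec{v}_h-\vec{\pi}_{k-2}\vec{v}_h\|_{L^2(K)}$. For the second factor, since $k-2 \geq 0$ we have $\vec{\pi}_0\vec{v}_h$ available and the $\vec{L}^2$-bestapproximation error is monotone in the degree, so $\|\vec{v}_h-\vec{\pi}_{k-2}\vec{v}_h\|_{L^2(K)} \leq \|\vec{v}_h-\vec{\pi}_0\vec{v}_h\|_{L^2(K)} \lesssim h_K\|\nabla\vec{v}_h\|_{L^2(K)}$ by the Poincar\'e-type inequality; indeed the energy-projection estimate~\eqref{eq:approxprop_gradientprojection} together with a standard estimate for $\Pi_h^{\nabla,K}\vec{v}_h - \vec{\pi}_0\vec{v}_h$ gives exactly this, and one absorbs the $h_K$ into $h_\mathcal{T}$. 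Combining via a discrete Cauchy--Schwarz over the elements yields
\begin{align*}
  F(\vec{v}_h) - F_h(\vec{v}_h)
  \lesssim \Bigl(\sum_K \|h_K(\vec{f}-\vec{\pi}_{k-2}\vec{f})\|_{L^2(K)}^2\Bigr)^{1/2}\,\|\nabla\vec{v}_h\|_{L^2},
\end{align*}
which after dividing by $\|\nabla\vec{v}_h\|_{L^2}$ and taking the supremum is the first claimed estimate, valid over all of $\vec{V}_h$ and hence a fortiori over $\vec{V}_{0,h}$. The final estimate $\|h_\mathcal{T}(\vec{f}-\vec{\pi}_{k-2}\vec{f})\|_{L^2} \lesssim \|h_\mathcal{T}^k\mathrm{D}^{k-1}\vec{f}\|_{L^2}$ is the standard polynomial approximation (Bramble--Hilbert/Dupont--Scott) estimate on each star-shaped polygon, applied at order $k-2$ so that the error scales like $h_K^{k-1}$ times the $(k-1)$-th derivatives, then multiplied by the extra factor $h_K$.

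The main obstacle, and the only point that is not completely routine, is the Poincar\'e-type bound $\|\vec{v}_h-\vec{\pi}_{k-2}\vec{v}_h\|_{L^2(K)} \lesssim h_K\|\nabla\vec{v}_h\|_{L^2(K)}$ for the \emph{virtual} function $\vec{v}_h$: one cannot integrate $\vec{v}_h$ against arbitrary polynomials without the degrees of freedom, so the argument must route through the computable energy projection $\Pi_h^{\nabla,K}\vec{v}_h$. The clean way is to write $\vec{v}_h - \vec{\pi}_0\vec{v}_h = (\vec{v}_h - \Pi_h^{\nabla,K}\vec{v}_h) + (\Pi_h^{\nabla,K}\vec{v}_h - \vec{\pi}_0\vec{v}_h)$, bound the first summand by~\eqref{eq:approxprop_gradientprojection}, and for the second use that $\Pi_h^{\nabla,K}\vec{v}_h$ is a polynomial whose $\vec{L}^2$-distance to its own mean is controlled by $h_K\|\nabla\Pi_h^{\nabla,K}\vec{v}_h\|_{L^2(K)} \leq h_K\|\nabla\vec{v}_h\|_{L^2(K)}$ (the last inequality because $\Pi_h^{\nabla,K}$ is an $a^K$-orthogonal projection and therefore a contraction in the $H^1$-seminorm), together with $\vec{\pi}_0(\vec{v}_h - \Pi_h^{\nabla,K}\vec{v}_h)=0$ which identifies the two means. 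All the constants here depend only on the shape-regularity parameters $\gamma_1,\gamma_2$, as required.
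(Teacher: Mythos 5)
Your proposal is correct and is precisely the standard argument that the paper's one-line proof invokes by reference (self-adjointness of $\vec{\pi}_{k-2}$, insertion of $\vec{\pi}_{k-2}\vec{f}$ via orthogonality, Cauchy--Schwarz, a local Poincar\'e bound for $\vec{v}_h-\vec{\pi}_{k-2}\vec{v}_h$, and Bramble--Hilbert for the final estimate). The only superfluous step is the detour through $\Pi_h^{\nabla,K}$: since the claim is an a priori bound rather than a computable quantity, $\|\vec{v}_h-\vec{\pi}_0\vec{v}_h\|_{L^2(K)}\lesssim h_K\|\nabla\vec{v}_h\|_{L^2(K)}$ follows directly from the Poincar\'e--Wirtinger inequality on the star-shaped polygon $K$ applied to the mean-zero $H^1$ function $\vec{v}_h-\vec{\pi}_0\vec{v}_h$, although your version via \eqref{eq:approxprop_gradientprojection} is also valid.
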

\begin{proof}
 This follows directly from the approximation properties of the \(\vec{L}^2\) bestapproximation \(\vec{\pi}_{k-2}\), see e.g.\ \cite{BasicVEM} for details.
\end{proof}

\begin{remark}[Classical VEM is not pressure-robust]
Although the virtual element method is divergence-free, it is in general
not pressure-robust with the classical right-hand side discretisation.
This drawback can be seen e.g.\ when $\vec{f} = \nabla q$ for some $q \notin P_{k-1}$ and small viscosity parameters $\nu$. Then, the method shows
a locking-phenomenon for \(\nu \rightarrow 0\) as it is also observed for classical finite element methods that are not divergence-free, see e.g.\ \cite{MR3743746,MR3683678} for a comprehensive introduction. 
The reason for that is that the operator 
\(\vec{\pi}_{k-2}\) alters the divergence and therefore destroys
the orthogonality between divergence-free functions and gradient forces.
The numerical examples below demonstrate this lack of pressure-robustness.
\end{remark}

\subsection{Extended virtual ansatz spaces}
One way of rendering the method more robust against gradient forces is to enlarge
the order of the projection $\vec{\pi}_s$ used in the right-hand side discretisation. With the so called enhanced spaces introduced in
\cite{MR3796371} it is
possible to employ $\vec{\pi}_k\vec{v}_h$ instead of $\vec{\pi}_{k-2}\vec{v}_h$.

For each $K\in\mathcal{T}$ the local enlarged virtual element space is given by
\begin{align*}
  \vec{U}_h^K :=\Big\{\,\vec{v}_h\in\vec{H}^1(K)\,:\,\vec{v}_{h}|_{\partial K}\in
    \vec{C}^0(\partial K),\,\vec{v}_{h}|_{E}\in\vec{P}_k(K)\text{ for all
  }E\in\mathcal{E}(K), &\\
 -\nu\Delta\vec{v}_h+\nabla
s\in\mathcal{G}_{k}(K)^\perp\text{ for some }s\in L^2(K),\,\mathrm{div}\,\vec{v}_h\in {P}_{k-1}(K)
&\,\Big\},
\end{align*}
where the order of the space $\mathcal{G}_s^\perp$ was
increased from $k-2$ to $k$.

This enlarged space can now be restricted to the enhanced space
\begin{align*}
W_h^K :=\Big\{\,\vec{v}_h\in\vec{U}_h^K \,:\,
\left(\vec{v}_h-\Pi_k^{\nabla,K}\vec{v}_h,\vec{g}_k^\perp\right)_{L^2(K)}=0\text{
for all }\vec{g}_k^\perp\in \mathcal{G}_k^\perp/\mathcal{G}_{k-2}^\perp\,\Big\}.
\end{align*}

This space has the interesting properties that it has the same dimension as the
classical virtual element space, but additionally allows to compute the
$L^2$-projection onto polynomials of degree $k$, see e.g.\
\cite{MR3796371,AHMAD2013376} for more details.

The discretisation of the right-hand side for the enhanced space then reads
\begin{align*}
  \vec{F}_h^e(\vec{v}_h) := \int_\Omega \vec{\pi}_{k} \vec{f} \cdot \vec{v}_h
  = \int_\Omega \vec{f} \cdot \vec{\pi}_{k} \vec{v}_h.
\end{align*}

This discretisation leads to a $(k+2)$-order consistency error, i.e.
\begin{align*}
  \| \vec{F} - \vec{F}_h^e \|_{\vec{V}_{0,h}^\star}
  \leq \| \vec{F} - \vec{F}_h^e \|_{\vec{V}_{h}^\star}
  \lesssim \| h_\mathcal{T} (\vec{f} - \vec{\pi}_{k} \vec{f}) \|_{L^2}
  \lesssim \| h_\mathcal{T}^{k+2} \mathrm{D}^{k+1} \vec{f} \|_{L^2},
\end{align*}
and hence the
velocity error can be bounded by
\begin{align*}
  \| \nabla(\vec{u} - \vec{u}_h) \|_{L^2} \lesssim \inf_{\vec{v}_h \in
  \vec{V}_{h}} \| \nabla( \vec{u} - \vec{v}_h) \|_{L^2} + \inf_{\vec{v}_h \in
\vec{P}_k(\mathcal{T})} \| \nabla_h(\vec{u} - \vec{v}_h) \|_{L^2} + \frac{1}{\nu} \|
  h_\mathcal{T}^{k+2} \mathrm{D}^{k+1} \vec{f} \|_{L^2}.
\end{align*}

\begin{remark}[Only asymptotic pressure-robustness]
As for the classical VEM the enhanced VEM is not pressure-robust. Consider again the situation
$\vec{f} = \nabla q$ for some $q \notin P_{k+1}$ and small viscosity parameters $\nu$. Then, on a fixed mesh, the method still shows the same
locking-behaviour for \(\nu \rightarrow 0\). 
However, for \(h \rightarrow 0\), the discretisation error converges with a faster rate
and renders the enhanced VEM at least asymptotically pressure-robust.

Uniform pressure-robustness, in particular on coarse grids, requires the replacement of \(\vec{\pi}_k\) by some operator that
preserves the divergence of \(\vec{v}_h\). This is the goal of the next section.
\end{remark}

\section{Divergence-preserving reconstruction operators on polygons}\label{sec:ReconstructionPolygons}
This section describes the design of a reconstruction operator
that is \(H(\mathrm{div})\)-conforming and preserves the divergence
of the virtual functions for all polygons \(K\in\mathcal{T}\).
The main idea is to employ a subtriangulation of each
polygon and to compute a suitable Raviart--Thomas interpolation on that
subtriangulation $\mathcal{T}(K)$. 

\subsection{Raviart--Thomas finite element space and interpolation}
The Raviart--Thomas finite element space of order $m$ on
a subtriangulation \(\mathcal{T}(K)\) is defined by
\begin{multline*}
  \mathrm{RT}_m(\mathcal{T}(K)):= \Bigl\lbrace
  \vec{w}_h\in\vec{P}_{m+1}(\mathcal{T}(K))\cap H(\mathrm{div},K) \,:\,
  \forall T\in\mathcal{T}(K)\,\exists\vec{a}\in \vec{P}_m(T),\,b\in P_m(T),\\
  \vec{w}_{h}|_T(\vec{x}) = \vec{a}(\vec{x}) + b(\vec{x})\vec{x}\Bigr\rbrace.
\end{multline*}
The standard Raviart-Thomas interpolation $\Pi_{\mathrm{RT}_m} \vec{v}_h \in \mathrm{RT}_m(\mathcal{T}(K))$ of some (virtual) function \(\vec{v}_h \in \vec{V}_h\) is defined by
  \begin{align*}
    \int_T (\Pi_{\mathrm{RT}_m} \vec{v}_h - \vec{v}_h) \cdot \vec{q}_h \,
    \mathrm{d}s & = 0 && \text{ for  all } \vec{q}_h \in \vec{P}_{m-1}(\mathcal{T}(P)),\\
     \int_E (\Pi_{\mathrm{RT}_m} \vec{v}_h -\vec{v}_h) \cdot \vec{n} q_h \, \mathrm{d}s & = 0 && \text{ for  all } E \in \mathcal{E}(\mathcal{T}(P)) \text{ and } q_h \in P_{m}(E).
  \end{align*} 
  Here \(\mathcal{E}(\mathcal{T}(P))\) denotes the set of edges in the subtriangulation \(\mathcal{T}(P)\). The following lemma collects the well-known properties of the Raviart--Thomas standard interpolation.

  \begin{lemma}[Properties of the Raviart--Thomas standard interpolation]
  \label{lem:approxprops_RTstandard}
  For any \(\vec{V}_h \in \vec{V}_h\), there holds
  \begin{align*}
    &\text(i) & \mathrm{div}(\Pi_{\mathrm{RT}_m}(\vec{v}_h)) & = \pi_m (\mathrm{div} (\vec{v}_h))\\
	&\text(ii) & \|  \vec{v}_h - \Pi_{\mathrm{RT}_m} \vec{v}_h \|_{L^2(K)} & \lesssim \| h_{\mathcal{T}(K)} \nabla \vec{v}_h \|_{L^2(K)} \leq h_K \|\nabla \vec{v}_h \|_{L^2(K)},\\
	&\text(iii) & \text{if } m > 0: \qquad \int_K (\vec{v}_h - \Pi_{\mathrm{RT}_m}
    \vec{v}_h) \cdot \vec{q}_h \, \mathrm{d}x & = 0
    \quad \text{for all } \vec{q}_h \in \vec{P}_{m-1}(\mathcal{T}(K)).
\end{align*}
  \end{lemma}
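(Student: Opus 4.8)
The plan is to handle the three claims separately; each reduces to a standard fact about the Raviart--Thomas interpolation on a single shape-regular triangle together with summation over the subtriangulation $\mathcal{T}(K)$, and all of them are classical (see e.g.\ \cite{MR3097958,GirRav-nse}), so I only indicate the arguments. For claim (i), the commuting-diagram property, I fix $T\in\mathcal{T}(K)$ and $q\in P_m(T)$ and integrate by parts,
\begin{align*}
  \int_T \mathrm{div}(\Pi_{\mathrm{RT}_m}\vec{v}_h)\, q \, \mathrm{d}x
  = -\int_T \Pi_{\mathrm{RT}_m}\vec{v}_h \cdot \nabla q \, \mathrm{d}x
  + \int_{\partial T} (\Pi_{\mathrm{RT}_m}\vec{v}_h \cdot \vec{n})\, q \, \mathrm{d}s.
\end{align*}
Since $\nabla q\in\vec{P}_{m-1}(T)$, the interior degrees of freedom in the definition of $\Pi_{\mathrm{RT}_m}$ let me replace $\Pi_{\mathrm{RT}_m}\vec{v}_h$ by $\vec{v}_h$ in the first term, and since $q|_E\in P_m(E)$ on each edge $E$ of $T$, the facial degrees of freedom do the same in the boundary term; here I use that $\vec{v}_h\in\vec{H}^1(K)$, so the relevant edge traces are single-valued across edges of $\mathcal{T}(K)$, which also guarantees $\Pi_{\mathrm{RT}_m}\vec{v}_h\in H(\mathrm{div},K)$. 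Integrating by parts back gives $\int_T \mathrm{div}(\Pi_{\mathrm{RT}_m}\vec{v}_h)\,q\,\mathrm{d}x=\int_T \mathrm{div}(\vec{v}_h)\,q\,\mathrm{d}x$; as $\mathrm{div}(\Pi_{\mathrm{RT}_m}\vec{v}_h)|_T\in P_m(T)$, this identifies it with the $L^2(T)$-bestapproximation $\pi_m(\mathrm{div}\,\vec{v}_h)$, and collecting over $T\in\mathcal{T}(K)$ yields (i).

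Claim (iii) is immediate from the definition: for $m>0$ the interior degrees of freedom state that $\int_T(\vec{v}_h-\Pi_{\mathrm{RT}_m}\vec{v}_h)\cdot\vec{q}_h\,\mathrm{d}x=0$ for every $\vec{q}_h\in\vec{P}_{m-1}(T)$, and any $\vec{q}_h\in\vec{P}_{m-1}(\mathcal{T}(K))$ restricts to such a polynomial on each $T\in\mathcal{T}(K)$; summing over the triangles gives the stated orthogonality on $K$.

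For claim (ii) I would use the standard scaling and Bramble--Hilbert argument. On the reference triangle $\hat T$ the local interpolation is bounded from $\vec{H}^1(\hat T)$ into $\vec{L}^2(\hat T)$ (the interior and facial functionals are continuous on $\vec{H}^1(\hat T)$) and it reproduces constant vector fields, so Bramble--Hilbert gives $\|\hat{\vec{v}}-\Pi_{\mathrm{RT}_m}\hat{\vec{v}}\|_{L^2(\hat T)}\lesssim\|\nabla\hat{\vec{v}}\|_{L^2(\hat T)}$; transferring to an arbitrary $T\in\mathcal{T}(K)$ by the affine map via the contravariant Piola transform (which commutes with $\Pi_{\mathrm{RT}_m}$) and tracking the powers of $h_T$ yields $\|\vec{v}_h-\Pi_{\mathrm{RT}_m}\vec{v}_h\|_{L^2(T)}\lesssim h_T\|\nabla\vec{v}_h\|_{L^2(T)}$. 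Squaring, summing over $T\in\mathcal{T}(K)$ and using $h_T\leq h_K$ gives both stated inequalities. The only delicate point — and the main, albeit mild, obstacle — is the \emph{uniformity} of the hidden constant: it is controlled precisely when the subtriangulations $\mathcal{T}(K)$ are chosen shape-regular uniformly in $K\in\mathcal{T}$ and over the mesh family, which is part of the construction specified in the next subsection. Note also that since $\vec{v}_h$ is merely $\vec{H}^1$, first order in $h_K$ is the best one can expect, in agreement with the statement.
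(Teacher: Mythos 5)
Your proposal is correct and is exactly what the paper intends: its own ``proof'' is merely a citation of standard mixed finite element textbooks, and the arguments you give (integration by parts against the degrees of freedom for the commuting property, the interior moments for the orthogonality, and Piola scaling plus Bramble--Hilbert for the $L^2$ estimate) are precisely the classical proofs being referenced. Your added remark that the hidden constant in (ii) requires uniform shape regularity of the subtriangulations $\mathcal{T}(K)$ is the one point not covered by the textbook reference, and you correctly identify that it is guaranteed by the construction under assumptions (A1)--(A2).
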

  \begin{proof}
    See textbooks like e.g.\ \cite{MR3097958}.
  \end{proof}
  
  Note, that the Raviart--Thomas standard interpolation of some virtual function cannot be calculated in general (see Remark~\ref{rem:ontriangles}
  for an exception on triangles). Hence, one has to devise a strategy based on the known degrees of freedom. The goal of the design below is to ensure crucial properties of the  Raviart--Thomas standard interpolation.

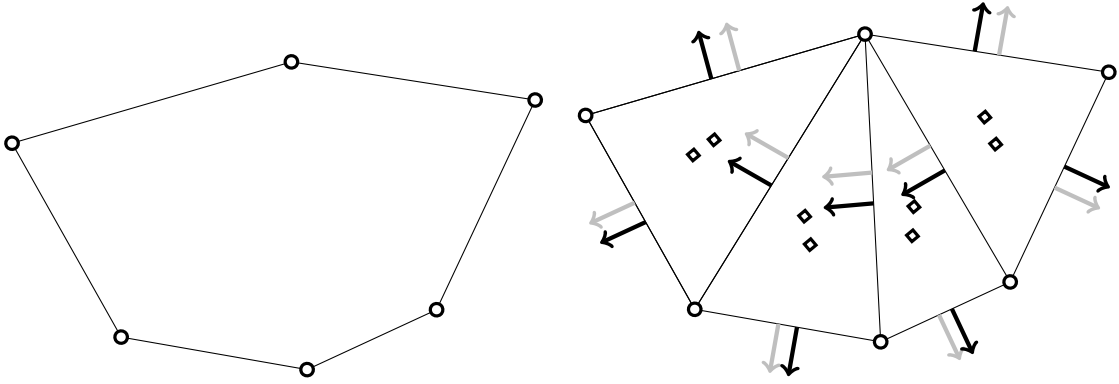
\begin{figure}
\begin{center}
\hfill
\begin{tikzpicture}[scale=0.33,rotate=130]
\path (280:2) coordinate (P1)
      (60:10) coordinate (P2)
      (110:11) coordinate (P3)
      (150:11) coordinate (P4)
      (180:11) coordinate (P5)
      (230:11) coordinate (P6);
\draw (P1)--(P2)--(P3)--(P4)--(P5)--(P6)--(P1);
\foreach \x in {P1,P2,P3,P4,P5,P6}
{%
\filldraw[fill=white,very thick] (\x) circle (0.25);
}
\end{tikzpicture}
\hfill
\begin{tikzpicture}[scale=0.33,rotate=130]
\path (280:2) coordinate (P1)
      (60:10) coordinate (P2)
      (110:11) coordinate (P3)
      (150:11) coordinate (P4)
      (180:11) coordinate (P5)
      (230:11) coordinate (P6);
\draw (P1)--(P2)--(P3)--(P4)--(P5)--(P6)--(P1);
\draw (P1)--(P3);
\draw (P1)--(P4);
\draw (P1)--(P5);

\path (P1)--(P2) coordinate[pos=0.45] (Q11)
      (P1)--(P2) coordinate[pos=0.55] (Q12)
      (P2)--(P3) coordinate[pos=0.45] (Q21)
      (P2)--(P3) coordinate[pos=0.55] (Q22)
      (P3)--(P4) coordinate[pos=0.45] (Q31)
      (P3)--(P4) coordinate[pos=0.55] (Q32)
      (P4)--(P5) coordinate[pos=0.45] (Q41)
      (P4)--(P5) coordinate[pos=0.55] (Q42)
      (P5)--(P6) coordinate[pos=0.45] (Q51)
      (P5)--(P6) coordinate[pos=0.55] (Q52)
      (P6)--(P1) coordinate[pos=0.45] (Q61)
      (P6)--(P1) coordinate[pos=0.55] (Q62)
      (P1)--(P3) coordinate[pos=0.45] (Q71)
      (P1)--(P3) coordinate[pos=0.55] (Q72)
      (P1)--(P4) coordinate[pos=0.45] (Q81)
      (P1)--(P4) coordinate[pos=0.55] (Q82)
      (P1)--(P5) coordinate[pos=0.45] (Q91)
      (P1)--(P5) coordinate[pos=0.55] (Q92)
      (Q21)--(P1) coordinate[pos=0.28] (M1)
      (Q21)--(P1) coordinate[pos=0.37] (M2)
      (Q32)--(P1) coordinate[pos=0.28] (M3)
      (Q31)--(P1) coordinate[pos=0.37] (M4)
      (Q41)--(P1) coordinate[pos=0.28] (M5)
      (Q42)--(P1) coordinate[pos=0.37] (M6)
      (Q51)--(P1) coordinate[pos=0.28] (M7)
      (Q52)--(P1) coordinate[pos=0.37] (M8);
\draw (P1)--(P2)--(P3)--(P1);
\draw[lightgray,ultra thick,->] (Q11)-- +(-25:2) coordinate (T11);
\draw[ultra thick,->] (Q12)-- +(-25:2) coordinate (T12);
\draw[lightgray,ultra thick,->] (Q21)-- +(75:2) coordinate (T21);
\draw[ultra thick,->] (Q22)-- +(75:2) coordinate (T22);
\draw[lightgray,ultra thick,->] (Q31)-- +(130:2) coordinate (T31);
\draw[ultra thick,->] (Q32)-- +(130:2) coordinate (T32);
\draw[lightgray,ultra thick,->] (Q41)-- +(165:2) coordinate (T41);
\draw[ultra thick,->] (Q42)-- +(165:2) coordinate (T42);
\draw[lightgray,ultra thick,->] (Q51)-- +(205:2) coordinate (T51);
\draw[ultra thick,->] (Q52)-- +(205:2) coordinate (T52);
\draw[lightgray,ultra thick,->] (Q61)-- +(-50:2) coordinate (T61);
\draw[ultra thick,->] (Q62)-- +(-50:2) coordinate (T62);
\draw[lightgray,ultra thick,->] (Q71)-- +(20:2) coordinate (T71);
\draw[ultra thick,->] (Q72)-- +(20:2) coordinate (T72);
\draw[lightgray,ultra thick,->] (Q81)-- +(55:2) coordinate (T81);
\draw[ultra thick,->] (Q82)-- +(55:2) coordinate (T82);
\draw[lightgray,ultra thick,->] (Q91)-- +(80:2) coordinate (T91);
\draw[ultra thick,->] (Q92)-- +(80:2) coordinate (T92);

\foreach \x in {P1,P2,P3,P4,P5,P6}
{%
\filldraw[fill=white,very thick] (\x) circle (0.25);
}

\foreach \x in {M1,M2,M3,M4,M5,M6,M7,M8}
{%
\filldraw[fill=white,very thick] (\x) rectangle ++(0.33,0.33);
}
\end{tikzpicture}
\hfill
\vspace{0.25cm}
\caption{
A polygon (left) and a possible subtriangulation (right)
and its facial (arrows) and interior (squares) degrees of freedom for the
Raviart--Thomas interpolation of order $1$. Light-gray arrows relate to the
degrees of freedom of the lowest-order Raviart--Thomas space.}
\label{fig:polygon_subtriangulation}
\end{center}
\end{figure}

\subsection{Design of reconstruction operator by local minimisation problems}
On a fixed subtriangulation \(\mathcal{T}(K)\) of a polygon \(K\) (such that no additional nodes on \(\partial K\) are introduced), the local
reconstruction of some local basis function \(\vec{v}_h\) is defined by
\begin{align}\label{eqn:reconstruction_on_polygons}
   I_{\mathrm{RT}_m}(\vec{v}_h)
   := \argmin_{\vec{w}_h \in W_h(K,\vec{v}_h,m)}
   \| \Pi^{\nabla,K}_k(\vec{v}_h) - \vec{w}_h \|_{L^2(K)}
\end{align}
where
\begin{align*}
 W_h(K,\vec{v}_h,m) := \Bigl\lbrace \vec{w}_h \in \mathrm{RT}_{m}(\mathcal{T}(K))
 : \forall q_h \in P_{m}(\mathcal{T}(K)), \ \int_{K} \mathrm{div}(\vec{v}_h - \vec{w}_h) q_h \, \mathrm{d}x & = 0 \\
 \text{and }
 \forall \vec{q}_h^\perp \in \mathcal{G}_{m-1}^\perp, \ \int_{K} (\vec{v}_h
 - \vec{w}_h) \cdot \vec{q}_h^\perp \, \mathrm{d}x & = \vec{0}  \\
  \text{ and }
  \forall E \in \mathcal{E}(K), q_h \in P_{m}(E), \ \int_E (\vec{v}_h - \vec{w}_h) \cdot \vec{n} \, q_h \, \mathrm{d}s & = 0 \Bigr\rbrace.
\end{align*}

The following lemma states that the set $W_h(K,\vec{v}_h,m)$ is non-empty, and
the minimisation problem defining \(I_{\mathrm{RT}_m}(\vec{v}_h)\) therefore is well-defined. Remark~\ref{rem:qlimit} however shows that
we have to choose \(m \leq k-1\) and the a priori error estimates in
Section~\ref{sec:aprioriprobust} show that only \(m \in \{k-2,k-1\}\) are
reasonable choices.

\begin{lemma}[$W_h(K,\vec{v}_h,m)$ is non-empty]\label{lem:non-empty}
   The piecewise standard Raviart-Thomas interpolation
   \(\Pi_{\mathrm{RT}_m} \vec{v}_h\)
   is included in \(W_h(K,\vec{v}_h,m)\). 
   Moreover, if \(K\) is a triangle, it holds $W_h(K,\vec{v}_h,m) = \lbrace \Pi_{\mathrm{RT}_m} \vec{v}_h\rbrace $.
\end{lemma}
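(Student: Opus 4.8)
The plan is to verify, one by one, that the piecewise standard Raviart--Thomas interpolant $\Pi_{\mathrm{RT}_m}\vec{v}_h$ satisfies each of the three defining constraints of $W_h(K,\vec{v}_h,m)$, and then to treat the triangle case separately by a dimension/unisolvence argument. The first observation is that the edge constraints of $W_h(K,\vec{v}_h,m)$ only involve the \emph{boundary} edges $E\in\mathcal{E}(K)$, which form a subset of the edges $\mathcal{E}(\mathcal{T}(K))$ appearing in the definition of $\Pi_{\mathrm{RT}_m}$; hence the normal-moment conditions $\int_E(\vec{v}_h-\Pi_{\mathrm{RT}_m}\vec{v}_h)\cdot\vec{n}\,q_h\,\mathrm{d}s=0$ for $q_h\in P_m(E)$ hold by the very definition of the standard interpolation. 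The second observation is that the divergence constraint follows from Lemma~\ref{lem:approxprops_RTstandard}(i): $\mathrm{div}(\Pi_{\mathrm{RT}_m}\vec{v}_h)=\pi_m(\mathrm{div}\,\vec{v}_h)$, so for any $q_h\in P_m(\mathcal{T}(K))$ we get $\int_K\mathrm{div}(\vec{v}_h-\Pi_{\mathrm{RT}_m}\vec{v}_h)q_h\,\mathrm{d}x=\int_K(\mathrm{div}\,\vec{v}_h-\pi_m(\mathrm{div}\,\vec{v}_h))q_h\,\mathrm{d}x=0$ since $q_h$ is a piecewise polynomial of degree $\le m$ against which $\pi_m$ is the $L^2$-bestapproximation.

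For the third constraint (the $\mathcal{G}_{m-1}^\perp$-moments), I would invoke Lemma~\ref{lem:approxprops_RTstandard}(iii): for $m>0$ the standard interpolation preserves all moments against $\vec{P}_{m-1}(\mathcal{T}(K))$, i.e.\ $\int_K(\vec{v}_h-\Pi_{\mathrm{RT}_m}\vec{v}_h)\cdot\vec{q}_h\,\mathrm{d}x=0$ for every piecewise polynomial $\vec{q}_h$ of degree $\le m-1$. Since $\mathcal{G}_{m-1}^\perp\subset\vec{P}_{m-1}(K)\subset\vec{P}_{m-1}(\mathcal{T}(K))$, the required identity $\int_K(\vec{v}_h-\Pi_{\mathrm{RT}_m}\vec{v}_h)\cdot\vec{q}_h^\perp\,\mathrm{d}x=\vec{0}$ is a special case (and for $m=0$ the set $\mathcal{G}_{-1}^\perp$ is empty, so there is nothing to check). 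This establishes $\Pi_{\mathrm{RT}_m}\vec{v}_h\in W_h(K,\vec{v}_h,m)$, hence the set is non-empty and the minimisation problem~\eqref{eqn:reconstruction_on_polygons} is well-posed.

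For the triangle case, when $K$ is itself a triangle the natural subtriangulation is $\mathcal{T}(K)=\{K\}$ with no interior edges, so $\mathcal{E}(\mathcal{T}(K))=\mathcal{E}(K)$ and $\mathcal{T}(K)$ has a single element. Then the three families of constraints defining $W_h(K,\vec{v}_h,m)$ are precisely: the edge normal moments against $P_m(E)$ for all three edges, the interior volume moments against $\nabla P_{m-1}(K)$ (recovered from the divergence constraint after integration by parts, using that $\int_K\mathrm{div}(\vec{w}_h)q_h = -\int_K\vec{w}_h\cdot\nabla q_h + \int_{\partial K}\vec{w}_h\cdot\vec{n}\,q_h$ and that the boundary term is already fixed by the edge constraints), together with the moments against $\mathcal{G}_{m-1}^\perp$; by the polynomial decomposition~\eqref{eq:polyDecomp} these two together pin down all moments against $\vec{P}_{m-1}(K)$. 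These are exactly the degrees of freedom of $\mathrm{RT}_m(K)$, which are unisolvent on a single triangle. Therefore any two elements of $W_h(K,\vec{v}_h,m)$ have identical Raviart--Thomas degrees of freedom and hence coincide; since $\Pi_{\mathrm{RT}_m}\vec{v}_h$ lies in the set, $W_h(K,\vec{v}_h,m)=\{\Pi_{\mathrm{RT}_m}\vec{v}_h\}$.

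The main obstacle I anticipate is the bookkeeping in the triangle case: one must check that the span of $\{\nabla P_{m-1}(K)\}$ obtained from the divergence moments, once the boundary contribution is subtracted off, together with $\mathcal{G}_{m-1}^\perp$ really exhausts $\vec{P}_{m-1}(K)$ with the right multiplicities and that no constraint is redundant or over-determined --- i.e.\ matching dimensions exactly with $\dim\mathrm{RT}_m(K)$. Care is also needed with the degenerate low-order cases ($m=0$, where the interior $\vec{P}_{m-1}$ conditions are vacuous and $\mathrm{RT}_0(K)$ has only the three edge degrees of freedom) to confirm the statement still holds there.
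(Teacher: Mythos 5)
Your proof is correct and follows essentially the same route as the paper: the boundary and $\mathcal{G}_{m-1}^\perp$ constraints are immediate from the definition of the standard interpolation, the divergence moments follow from the commuting property of Lemma~\ref{lem:approxprops_RTstandard}(i) (the paper re-derives this by the underlying elementwise integration by parts), and the triangle case is precisely the ``backward calculation'' with the splitting \eqref{eq:polyDecomp} that the paper only sketches. One small index slip: the interior moments recovered from the divergence constraint by integration by parts are against $\nabla P_m(K)$, not $\nabla P_{m-1}(K)$ --- which is exactly what is needed so that $\nabla P_m(K) + \mathcal{G}_{m-1}^\perp$ exhausts $\vec{P}_{m-1}(K)$.
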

\begin{proof}
  It suffices to show that the moments of the divergence are preserved by
  the standard interpolation. An integration by parts indeed shows, for
  any \(q_h \in P_{m}(\mathcal{T}(K)\),
  \begin{align*}
    \int_{K} \mathrm{div}(\vec{v}_h - \Pi_{\mathrm{RT}_m} \vec{v}_h) q_h \, \mathrm{d}x
    & = - \int_{K} (\vec{v}_h - \Pi_{\mathrm{RT}_m} \vec{v}_h) \cdot \nabla q_h \, \mathrm{d}x + \int_{\partial K} (\Pi_{\mathrm{RT}_m} \vec{v}_h -\vec{v}_h) \cdot \vec{n} q_h \, \mathrm{d}s\\
    & = 0
  \end{align*}
  due to \(\nabla q_h \in \vec{P}_{m-1}(\mathcal{T}(P))\)
  and the properties of the standard interpolation. For \(m = 0\), the
  first property of the standard interpolation is not available, but
  also the integral over \(K\) on the right-hand side vanishes.
  This shows $\Pi_{\mathrm{RT}_m} \vec{v}_h \in W_h(K,\vec{v}_h,m)$.
  
  On a triangle (with no interior edges), a similar backward calculation
  employing the splitting \eqref{eq:polyDecomp} shows
  that every $w_h \in W_h(K,\vec{v}_h,m) $ satisfies the properties of
  $\Pi_{\mathrm{RT}_m} \vec{v}_h$, and hence $W_h(K,\vec{v}_h,m) = \lbrace \Pi_{\mathrm{RT}_m} \vec{v}_h\rbrace $.
\end{proof}

\begin{remark}
\label{rem:ontriangles}
In general the standard interpolation $\Pi_{\mathrm{RT}_m} \vec{v}_h$ of Lemma
\ref{lem:non-empty} is not computable due to the virtuality of \(\vec{v}_h\). However, if \(K\) is a triangle the standard interpolation is  directly computable up to degree \(m \leq k-1\), due to the explanations in the next remark.
\end{remark}

\begin{remark}[Constraints are computable for \(m \leq k-1\)]
\label{rem:qlimit}
Observe, that the computation of $I_{\mathrm{RT}_m}(\vec{v}_h)$ up to degree \(m \leq k-1\) for any virtual function $\vec{v}_h\in\vec{V}_h$
is possible and only involves the evaluation of the degrees of freedom of \(\vec{v}_h\).
Indeed, the divergence is a polynomial of degree at most $k-1$ and is explicitly
available using only \textbf{D\textsubscript{V}1}, \textbf{D\textsubscript{V}2}
and \textbf{D\textsubscript{V}4} (see~\cite{DASSI2019}), and hence
\begin{align*}
\int_{K} \mathrm{div}(\vec{v}_h - \vec{w}_h) q_h \, \mathrm{d}x = 0 \text{ for  all } q_h \in P_{m}(\mathcal{T}(K))
\end{align*}
is an integral over polynomials that can be computed.

The integrals related to the space \(\mathcal{G}_{m-1}^\perp\) are also directly
available from \textbf{D\textsubscript{V}3} up to degree \(m \leq k-1\) (this condition in fact prohibits to
choose \(m\) larger than \(k-1\)).

Finally, since $\vec{v}_h$ along the boundary is a polynomial of degree at most $k$ also the boundary integral
\begin{align*}
 \int_E (\vec{w}_h \cdot \vec{n}) q_h \, \mathrm{d}s = \int_E (\vec{v}_h \cdot \vec{n}) q_h \, \mathrm{d}s \text{ for  all } E \in \mathcal{E}(\partial K) 
 \text{ and } q_h \in P_{m}(E)
\end{align*}
is computable. Please confer to \cite{masterthesis} for more details and instructions for the implementation in the case \(k=2\).
\end{remark}

\begin{remark}
It is possible to fix all degrees of freedom related to the lowest order
Raviart--Thomas functions (gray arrows in Figure~\ref{fig:polygon_subtriangulation} for $m=1$) separately via the preservation of the integral mean of the polynomial divergence of the virtual function \(\vec{V}_h\). This slightly reduces the costs of the local minimisation problems.
However, please note that in any case the costs of the local minimisation problems are
comparable to the costs of the computation of \(\Pi^{\nabla,K}_k\)
and hence do not cause severe computational overhead.
\end{remark}

\begin{remark}
It is also possible to replace the finite element spaces $\mathrm{RT}_m$
by the slightly larger Brezzi--Douglas--Marini spaces $\mathrm{BDM}_{m+1}$, which may offer a slightly better divergence-free postprocessing of the
solution \(\vec{u}_h\).
\end{remark}

The following Lemma summarises the properties that can be expected from this strategy. 

\begin{theorem}[Properties of the reconstruction]
\label{thm:properties}
  For any \(\vec{v}_h \in \vec{V}_h(K)\), there holds
  \begin{align*}
    &\text(i) & \mathrm{div}(I_{\mathrm{RT}_m}(\vec{v}_h)) & = \pi_m (\mathrm{div} (\vec{v}_h))\\
	&\text(ii) & \|  \vec{v}_h - I_{\mathrm{RT}_m} \vec{v}_h \|_{L^2(K)} & \lesssim h_K \| \nabla \vec{v}_h \|_{L^2(K)},\\
	&\text(iii) & \text{if } m > 0: \qquad \int_K (\vec{v}_h - I_{\mathrm{RT}_m}
    \vec{v}_h) \cdot \vec{q}_h \, \mathrm{d}x & = 0
    \quad \text{for all } \vec{q}_h \in \vec{P}_{m-1}(K).
\end{align*}
\end{theorem}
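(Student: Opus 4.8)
My plan is to derive each of the three properties directly from the definition of $I_{\mathrm{RT}_m}(\vec{v}_h)$ as the minimiser over $W_h(K,\vec{v}_h,m)$, using Lemma~\ref{lem:non-empty} (that $W_h$ is non-empty and in fact contains $\Pi_{\mathrm{RT}_m}\vec{v}_h$) and the Raviart--Thomas interpolation properties of Lemma~\ref{lem:approxprops_RTstandard} as a comparison point. Property (i) is almost immediate: by construction every element of $W_h(K,\vec{v}_h,m)$, in particular $I_{\mathrm{RT}_m}(\vec{v}_h)$, satisfies $\int_K \mathrm{div}(\vec{v}_h - I_{\mathrm{RT}_m}\vec{v}_h)\,q_h\,\mathrm{d}x = 0$ for all $q_h \in P_m(\mathcal{T}(K))$. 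Since $I_{\mathrm{RT}_m}(\vec{v}_h)\in \mathrm{RT}_m(\mathcal{T}(K))$ has piecewise-polynomial divergence of degree at most $m$, and $\mathrm{div}(\vec{v}_h)$ is a polynomial (of degree at most $k-1$) on $K$, this set of moment conditions exactly characterises $\mathrm{div}(I_{\mathrm{RT}_m}\vec{v}_h)$ as the piecewise $L^2$-projection $\pi_m(\mathrm{div}\,\vec{v}_h)$ onto $P_m(\mathcal{T}(K))$. I need to be slightly careful about whether the test space is $P_m$ or $P_m/\mathbb{R}$, but the global divergence constraint $\mathrm{D}_{\mathrm{V}}4$-type condition together with the per-triangle membership in $\mathrm{RT}_m$ pins down the constant as well, so (i) holds with the full $\pi_m$.

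For property (iii), assume $m>0$. I want to show $\int_K (\vec{v}_h - I_{\mathrm{RT}_m}\vec{v}_h)\cdot\vec{q}_h\,\mathrm{d}x = 0$ for all $\vec{q}_h \in \vec{P}_{m-1}(K)$. The key is the polynomial splitting \eqref{eq:polyDecomp}: write $\vec{q}_{m-1} = \nabla r_m + \vec{s}_{m-1}^\perp$ with $r_m \in P_m$ and $\vec{s}_{m-1}^\perp \in \mathcal{G}_{m-1}^\perp$. The orthogonal part is handled directly by the constraint $\int_K(\vec{v}_h - I_{\mathrm{RT}_m}\vec{v}_h)\cdot\vec{q}_h^\perp\,\mathrm{d}x = \vec{0}$ built into $W_h$. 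For the gradient part, integrate by parts: $\int_K (\vec{v}_h - I_{\mathrm{RT}_m}\vec{v}_h)\cdot\nabla r_m\,\mathrm{d}x = -\int_K \mathrm{div}(\vec{v}_h - I_{\mathrm{RT}_m}\vec{v}_h)\,r_m\,\mathrm{d}x + \int_{\partial K}(\vec{v}_h - I_{\mathrm{RT}_m}\vec{v}_h)\cdot\vec{n}\,r_m\,\mathrm{d}s$. The volume term vanishes by the divergence constraint since $r_m \in P_m$, and the boundary term vanishes because $I_{\mathrm{RT}_m}\vec{v}_h$ matches the normal moments of $\vec{v}_h$ against $P_m(E)$ on each boundary edge $E \in \mathcal{E}(K)$. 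Adding the two contributions gives (iii). (Note this matches exactly the "backward calculation" alluded to in the proof of Lemma~\ref{lem:non-empty} on triangles.)

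Property (ii), the $L^2$-error bound, is the one requiring the most thought, since it is here that the minimisation over $W_h$ rather than a closed formula enters. The strategy is a triangle inequality: since $I_{\mathrm{RT}_m}\vec{v}_h$ minimises $\|\Pi^{\nabla,K}_k\vec{v}_h - \vec{w}_h\|_{L^2(K)}$ over $\vec{w}_h \in W_h(K,\vec{v}_h,m)$, and $\Pi_{\mathrm{RT}_m}\vec{v}_h \in W_h(K,\vec{v}_h,m)$ by Lemma~\ref{lem:non-empty}, we get
\begin{align*}
  \|\vec{v}_h - I_{\mathrm{RT}_m}\vec{v}_h\|_{L^2(K)}
  &\leq \|\vec{v}_h - \Pi^{\nabla,K}_k\vec{v}_h\|_{L^2(K)} + \|\Pi^{\nabla,K}_k\vec{v}_h - I_{\mathrm{RT}_m}\vec{v}_h\|_{L^2(K)}\\
  &\leq \|\vec{v}_h - \Pi^{\nabla,K}_k\vec{v}_h\|_{L^2(K)} + \|\Pi^{\nabla,K}_k\vec{v}_h - \Pi_{\mathrm{RT}_m}\vec{v}_h\|_{L^2(K)}\\
  &\leq 2\,\|\vec{v}_h - \Pi^{\nabla,K}_k\vec{v}_h\|_{L^2(K)} + \|\vec{v}_h - \Pi_{\mathrm{RT}_m}\vec{v}_h\|_{L^2(K)}.
\end{align*}
The first term is bounded by $h_K\|\nabla\vec{v}_h\|_{L^2(K)}$ via the Poincaré-type estimate \eqref{eq:approxprop_gradientprojection}, and the last term by $h_K\|\nabla\vec{v}_h\|_{L^2(K)}$ via Lemma~\ref{lem:approxprops_RTstandard}(ii). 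I expect the main obstacle to be keeping the shape-regularity dependence clean: the mesh size $h_{\mathcal{T}(K)}$ of the subtriangulation must be comparable to $h_K$, which holds because the subtriangulation introduces no new boundary nodes and $K$ is shape-regular by (A1)--(A2), so the hidden constants depend only on $\gamma_1,\gamma_2$. Assembling the three bounds and summing over $K\in\mathcal{T}$ completes the proof.
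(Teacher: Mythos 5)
Your proof is correct and follows essentially the same route as the paper: (i) from the divergence constraints built into $W_h(K,\vec{v}_h,m)$, (iii) via the splitting \eqref{eq:polyDecomp} plus integration by parts (you are in fact slightly more careful than the paper in writing out the boundary term, which vanishes by the edge constraints in $W_h$), and (ii) by comparing the minimiser with $\Pi_{\mathrm{RT}_m}\vec{v}_h \in W_h(K,\vec{v}_h,m)$ from Lemma~\ref{lem:non-empty} and invoking \eqref{eq:approxprop_gradientprojection} and Lemma~\ref{lem:approxprops_RTstandard}(ii). The only cosmetic difference is in (ii), where you exploit the minimality of $I_{\mathrm{RT}_m}\vec{v}_h$ directly to bound $\| \Pi^{\nabla,K}_k\vec{v}_h - I_{\mathrm{RT}_m}\vec{v}_h \|_{L^2(K)}$ by $\| \Pi^{\nabla,K}_k\vec{v}_h - \Pi_{\mathrm{RT}_m}\vec{v}_h \|_{L^2(K)}$, whereas the paper reaches the same intermediate bound via the orthogonality of the minimiser and Cauchy--Schwarz; the two arguments give the same estimate up to a constant.
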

\begin{proof}
Property (i) directly follows from the definition of \(W_h(K,\vec{v}_h,m)\).

For the proof of (ii), consider the piecewise \(\mathrm{RT}_m\) standard
interpolation \(\Pi_{\mathrm{RT}_m} \vec{v}_h\) of \(\vec{v}_h\) on the
subtriangulation and once again note that \(\Pi_{\mathrm{RT}_m} \vec{v}_h \in
\vec{W}_h(K,\vec{v}_h,m)\). Since
\begin{align*}
  (I_{\mathrm{RT}_m} \vec{v}_h - \Pi^{\nabla,K}_k(\vec{v}_h), \vec{w}_h) = 0
  \quad \text{for all } w_h \in \vec{W}_h(K,\vec{v}_h,m)
\end{align*}
by \eqref{eqn:reconstruction_on_polygons},
we obtain for \(\vec{w_h} = I_{\mathrm{RT}_m} \vec{v}_h\)
and \(\vec{w}_h = \Pi_{\mathrm{RT}_m} \vec{v}_h\)
\begin{align*}
  \| I_{\mathrm{RT}_m} \vec{v}_h - \Pi_{\mathrm{RT}_m} \vec{v}_h \|^2_{L^2(K)}
  & = (I_{\mathrm{RT}_m} \vec{v}_h - \Pi_{\mathrm{RT}_m} \vec{v}_h, I_{\mathrm{RT}_m} \vec{v}_h - \Pi_{\mathrm{RT}_m} \vec{v}_h)\\
  & = (\Pi^{\nabla,K}_k(\vec{v}_h) - \Pi_{\mathrm{RT}_m} \vec{v}_h, I_{\mathrm{RT}_m} \vec{v}_h - \Pi_{\mathrm{RT}_m} \vec{v}_h)\\
  & \leq \| \Pi^{\nabla,K}_k(\vec{v}_h) - \Pi_{\mathrm{RT}_m} \vec{v}_h \|_{L^2(K)}  \| I_{\mathrm{RT}_m} \vec{v}_h - \Pi_{\mathrm{RT}_m} \vec{v}_h \|_{L^2(K)}.
\end{align*}
This, a triangle inequality and the first-order approximation properties of
\(\Pi^\nabla_k(\vec{v}_h)\) (see \eqref{eq:approxprop_gradientprojection}) and \(\Pi_{\mathrm{RT}_m} \vec{v}_h\) (piecewise for each subtriangle, see Lemma~\ref{lem:approxprops_RTstandard}.(ii)) show
\begin{align*}
  \| I_{\mathrm{RT}_m} \vec{v}_h - \Pi_{\mathrm{RT}_m} \vec{v}_h \|_{L^2(K)}
  \leq \| \Pi^{\nabla,K}_k(\vec{v}_h) - \Pi_{\mathrm{RT}_m} \vec{v}_h \|_{L^2(K)}
  \lesssim h_K \| \nabla \vec{v}_h \|_{L^2(K)}.
\end{align*}
Another triangle inequality gives the desired result (ii).

For the proof of (iii), consider any \(\vec{q}_h \in \vec{P}_{m-1}(K)\) and its
decomposition
\eqref{eq:polyDecomp}
into some \(r_h \in P_{m}(K)\) and \(\vec{s}_h^\perp \in \mathcal{G}_{m-1}^\perp\) such that
\begin{align*}
  \vec{q}_h = \nabla r_h + \vec{s}_h^\perp.
\end{align*}
Then, an integration by parts shows
\begin{align*}
  \int_K (\vec{v}_h - I_{\mathrm{RT}_m} \vec{v}_h) \vec{q}_h \, \mathrm{d}x
  = - \int_K \mathrm{div}(\vec{v}_h - I_{\mathrm{RT}_m} \vec{v}_h) r_h \,
  \mathrm{d}x + \int_K (\vec{v}_h - I_{\mathrm{RT}_m} \vec{v}_h) \vec{s}_h^\perp \, \mathrm{d}x.
\end{align*}
Both integrals vanish due to \(I_{\mathrm{RT}_m}\vec{v}_h \in W_h(K,\vec{v}_h,m)\).
\end{proof}

\section{Pressure-robust a priori error estimates}
\label{sec:aprioriprobust}
This section shows pressure-robust estimates for the discretisation
error of the right-hand side. Together with Theorem~\ref{thm:apriori_full}
convergence rates for the modified method can be derived.

\subsection{Estimates for \texorpdfstring{$I_{\mathrm{RT}_{k-1}}$}{IRTk-1}}
Consider the modified right-hand side discretisation
\begin{align*}
  \vec{F}_{\mathrm{RT}_{k-1}}(\vec{v}_h) := \int_\Omega \vec{f} \cdot
  I_{\mathrm{RT}_{k-1}}(\vec{v}_h) \, \mathrm{d}x.
\end{align*}

\begin{lemma}[Modified right-hand side discretisation consistency error]
The consistency error of the modified right-hand side discretisation is bounded by
\begin{align*}
  \| \vec{F} - \vec{F}_{\mathrm{RT}_{k-1}} \|_{\vec{V}_{0,h}^\star}
  & := \sup_{\vec{V}_h \in \vec{V}_h \setminus \lbrace 0 \rbrace} \frac{\vec{F}(\vec{v}_h) - \vec{F}_{\mathrm{RT}_{k-1}}(\vec{v}_h)}{\| \nabla \vec{v}_h \|_{L^2}}
  \lesssim \| h_\mathcal{T} (\mathbb{P} \vec{f} - \vec{\pi}_{k-2}(\mathbb{P} \vec{f})) \|_{L^2},\\
  \| \vec{F} - \vec{F}_{\mathrm{RT}_{k-1}} \|_{\vec{V}_{h}^\star}
  & := \sup_{\vec{V}_h \in \vec{V}_h \setminus \lbrace 0 \rbrace} \frac{\vec{F}(\vec{v}_h) - \vec{F}_{\mathrm{RT}_{k-1}}(\vec{v}_h)}{\| \nabla \vec{v}_h \|_{L^2}}
  \lesssim \| h_\mathcal{T} (\vec{f} - \vec{\pi}_{k-2}\vec{f}) \|_{L^2}
\end{align*}
where \(\vec{\pi}_{-1} \equiv 0\).
If \(\Delta \vec{u} \in H^{k-1}\), then
it holds
\begin{align*}
\| \vec{F} - \vec{F}_{\mathrm{RT}_{k-1}} \|_{\vec{V}_{0,h}^\star} \lesssim \nu \| h^k_\mathcal{T} D^{k-1} \Delta \vec{u} \|_{L^2}.
\end{align*}
\end{lemma}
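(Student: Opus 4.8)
The plan is to start from the identity
\begin{align*}
  \vec{F}(\vec{v}_h) - \vec{F}_{\mathrm{RT}_{k-1}}(\vec{v}_h)
  = \int_\Omega \vec{f} \cdot (\vec{v}_h - I_{\mathrm{RT}_{k-1}}(\vec{v}_h)) \, \mathrm{d}x,
\end{align*}
valid for every \(\vec{v}_h \in \vec{V}_h\), and to exploit the orthogonality and approximation properties collected in Theorem~\ref{thm:properties}. For the \(\vec{V}_h^\star\)-estimate, since \(k-1\ge 1\) property (iii) allows to subtract the piecewise best approximation \(\vec{\pi}_{k-2}\vec{f}\) elementwise without changing the value of the integral (for \(k=1\) this step is void because \(\vec{\pi}_{-1}\equiv 0\)). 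A Cauchy--Schwarz inequality on each \(K\) together with property (ii) then yields
\begin{align*}
  |\vec{F}(\vec{v}_h) - \vec{F}_{\mathrm{RT}_{k-1}}(\vec{v}_h)|
  \le \sum_{K\in\mathcal{T}} \| \vec{f} - \vec{\pi}_{k-2}\vec{f} \|_{L^2(K)} \, h_K \| \nabla \vec{v}_h \|_{L^2(K)}
  \lesssim \| h_\mathcal{T}(\vec{f} - \vec{\pi}_{k-2}\vec{f}) \|_{L^2}\, \| \nabla \vec{v}_h \|_{L^2},
\end{align*}
and division by \(\| \nabla \vec{v}_h \|_{L^2}\) gives the second bound.

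For the \(\vec{V}_{0,h}^\star\)-estimate I would bring in the Helmholtz--Hodge decomposition \(\vec{f} = \nabla\alpha + \mathbb{P}\vec{f}\) from \eqref{eqn:HHLdecomposition}. The key observation is that, for \(\vec{v}_h\in\vec{V}_{0,h}\), the reconstruction error \(\vec{v}_h - I_{\mathrm{RT}_{k-1}}(\vec{v}_h)\) lies in \(\vec{L}^2_\sigma(\Omega)\): it is \(H(\mathrm{div},\Omega)\)-conforming (both summands are, the reconstruction being conforming since the edge constraints fix its normal component as the \(L^2\)-projection of the single-valued trace \(\vec{v}_h\cdot\vec{n}\)), its divergence vanishes by Theorem~\ref{thm:properties}(i) together with \(\mathrm{div}\,\vec{v}_h=0\), and its normal trace vanishes on \(\partial\Omega\) because \(\vec{v}_h\) has zero boundary values and the boundary constraints in the definition of \(W_h(K,\vec{v}_h,k-1)\) force \((I_{\mathrm{RT}_{k-1}}(\vec{v}_h)\cdot\vec{n})|_E=0\) on every boundary edge. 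Hence \((\nabla\alpha,\vec{v}_h - I_{\mathrm{RT}_{k-1}}(\vec{v}_h))=0\), so \(\vec{f}\) may be replaced by \(\mathbb{P}\vec{f}\) in the error identity, and repeating the argument of the previous paragraph with \(\mathbb{P}\vec{f}\) in place of \(\vec{f}\) produces the first bound.

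Finally, for the estimate in terms of \(\vec{u}\), I would use the identification \(\mathbb{P}\vec{f} = -\nu\Delta\vec{u}\), which holds when paired against functions in \(\vec{L}^2_\sigma(\Omega)\) (since \(\vec{f}+\nu\Delta\vec{u} = \nabla p\) and both \(\nabla p\) and \(\nabla\alpha\) are \(\vec{L}^2\)-orthogonal to \(\vec{L}^2_\sigma\)); combining it with the \(\vec{L}^2_\sigma\)-membership established above and Theorem~\ref{thm:properties}(iii) gives, on \(\vec{V}_{0,h}\),
\begin{align*}
  \vec{F}(\vec{v}_h) - \vec{F}_{\mathrm{RT}_{k-1}}(\vec{v}_h)
  = -\nu \int_\Omega (\Delta\vec{u} - \vec{\pi}_{k-2}(\Delta\vec{u})) \cdot (\vec{v}_h - I_{\mathrm{RT}_{k-1}}(\vec{v}_h)) \, \mathrm{d}x.
\end{align*}
Cauchy--Schwarz, Theorem~\ref{thm:properties}(ii) and the standard elementwise polynomial approximation estimate \(\| \Delta\vec{u} - \vec{\pi}_{k-2}(\Delta\vec{u}) \|_{L^2(K)} \lesssim h_K^{k-1}|\Delta\vec{u}|_{H^{k-1}(K)}\) (which is where \(\Delta\vec{u}\in\vec{H}^{k-1}\) enters) then yield the claimed bound \(\nu\| h_\mathcal{T}^k D^{k-1}\Delta\vec{u} \|_{L^2}\). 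I expect the main obstacle to be the middle paragraph: one must argue carefully that the reconstruction error genuinely belongs to \(\vec{L}^2_\sigma(\Omega)\) — in particular the vanishing of its normal trace on \(\partial\Omega\) and its global \(H(\mathrm{div})\)-conformity — and one must be precise about whether \(\Delta\vec{u}\) is read in \(\vec{L}^2\) or only distributionally; once the \(\vec{L}^2_\sigma\)-membership and the identification \(\mathbb{P}\vec{f}=-\nu\Delta\vec{u}\) are secured, the remaining steps are routine.
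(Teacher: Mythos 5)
Your proposal is correct and follows essentially the same route as the paper: replace \(\vec{f}\) by \(\mathbb{P}\vec{f}\) (resp.\ keep \(\vec{f}\)) via the orthogonality of the reconstruction error to gradients, insert \(\vec{\pi}_{k-2}\) using Theorem~\ref{thm:properties}(iii), and conclude with Cauchy--Schwarz and property (ii), with the identification \(\mathbb{P}\vec{f}=-\nu\Delta\vec{u}\) for the final bound. The only difference is that you spell out why \(\vec{v}_h - I_{\mathrm{RT}_{k-1}}\vec{v}_h \in \vec{L}^2_\sigma(\Omega)\) (global \(H(\mathrm{div})\)-conformity, preserved divergence, vanishing normal trace), a step the paper merely asserts as the identity \(\int_\Omega(\vec{f}-\mathbb{P}\vec{f})\cdot(\vec{v}_h-I_{\mathrm{RT}_{k-1}}\vec{v}_h)\,\mathrm{d}x=0\).
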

\begin{proof}
  Indeed, for any (divergence-free) \(\vec{v}_h \in \vec{\vec{V}_{0,h}}\), it holds
  \begin{align*}
  \int_\Omega (\vec{f} - \mathbb{P} \vec{f}) \cdot (\vec{v}_h -  I_{\mathrm{RT}_{k-1}} \vec{v}_h) \, \mathit{dx} = 0.
  \end{align*}
  This and
  the properties (i)-(iii) of Lemma~\ref{thm:properties}  yield
  \begin{align*}
    \vec{F}(\vec{v}_h) - \vec{F}_{\mathrm{RT}_{k-1}}(\vec{v}_h)
    & = \int_\Omega \mathbb{P} \vec{f} \cdot (\vec{v}_h -  I_{\mathrm{RT}_{k-1}}
  \vec{v}_h) \, \mathrm{d}x\\
    & = \int_\Omega (\mathbb{P} \vec{f} - \vec{\pi}_{k-2} (\mathbb{P} \vec{f})) \cdot
  (\vec{v}_h -  I_{\mathrm{RT}_{k-1}} \vec{v}_h) \, \mathrm{d}x\\
    & \lesssim \sum_{K \in \mathcal{T}} \| \mathbb{P} \vec{f} - \vec{\pi}_{k-2} (\mathbb{P} \vec{f}) \|_{L^2(K)} \| \vec{v}_h -  I_{\mathrm{RT}_{k-1}} \vec{v}_h \|_{L^2(K)}\\
    &  \lesssim \sum_{K \in \mathcal{T}} h_P \| \mathbb{P} \vec{f} - \vec{\pi}_{k-2} (\mathbb{P} \vec{f}) \|_{L^2(K)} \| \nabla \vec{v}_h \|_{L^2(P)}\\
    & \leq \| h_\mathcal{T} (\mathbb{P} \vec{f} - \vec{\pi}_{k-2}(\mathbb{P} \vec{f}) \|_{L^2} \| \nabla \vec{v}_h \|_{L^2}.
  \end{align*}
  Since also
\((\mathbb{P} \vec f + \nu \Delta \vec{u}, \vec{v}_h -  I_{\mathrm{RT}_{k-1}}
  \vec{v}_h)_{L^2} = 0\), 
  the same calculation can be performed with  \(\mathbb{P} \vec f\)
  replaced by \(- \nu \Delta \vec{u}\) and leads to
\begin{align*}
\vec{F}(\vec{v}_h) - \vec{F}_{\mathrm{RT}_{k-1}}(\vec{v}_h) \lesssim \nu \| h_\mathcal{T} (\Delta \vec{u} - \vec{\pi}_{k-2}(\Delta \vec{u})) \|_{L^2} \| \nabla \vec{v}_h \|_{L^2}
\lesssim \nu \| h^k_\mathcal{T} D^{k-1} \Delta \vec{u} \|_{L^2} \| \nabla \vec{v}_h \|_{L^2}.
\end{align*}
  
  For (non divergence-free) \(\vec{v}_h \in \vec{V}_h\), one has to do the same calculation with \(\vec{f}\) instead of \(\mathbb{P} \vec{f}\).
\end{proof}
Hence, the reconstruction operator with \(m = k-1\) results in a
discretisation error of optimal order that is pressure-robust.

\subsection{Alternative estimate for \texorpdfstring{$m = 0$}{m=0}}
\label{subsec:RT0}
Consider the lowest-order interpolation
\begin{align*}%\label{eqn:reconstruction_on_polygons0}
   I_{\mathrm{RT}_0}(\vec{v}_h)
   := \argmin_{\vec{w}_h \in W_h(K,\vec{v}_h,0)}
   \| \Pi^{\nabla,K}_k(\vec{v}_h) - \vec{w}_h \|_{L^2(K)}.
\end{align*}

\begin{theorem}
Given some right-hand side \(\vec{f}\) with \(\vec{f} \in H(\mathrm{curl},\Omega)\), it holds
\begin{align*}
  \| \vec{F} - \vec{F}_{\mathrm{RT}_0} \|_{\vec{V}_{0,h}^\star}
  \lesssim \| h^2_\mathcal{T} \mathrm{curl}(\vec{f}) \|_{L^2}.
\end{align*}
\end{theorem}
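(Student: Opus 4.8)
The plan is to estimate the consistency error directly, for every divergence-free test function. Fix $\vec{v}_h\in\vec{V}_{0,h}$, so that $\mathrm{div}\,\vec{v}_h=0$, and write $\nabla^\perp:=(-\partial_2,\partial_1)^{T}$ for the rotated gradient, so that $\int_\Omega\vec{g}\cdot\nabla^\perp\psi\,\mathrm{d}x=-\int_\Omega\mathrm{curl}(\vec{g})\,\psi\,\mathrm{d}x$ for all $\vec{g}\in H(\mathrm{curl},\Omega)$ and $\psi\in H^1_0(\Omega)$. First I would insert the piecewise standard Raviart--Thomas interpolation $\Pi_{\mathrm{RT}_0}\vec{v}_h$ on the (shape-regular, globally conforming) subtriangulation, which by Lemma~\ref{lem:non-empty} lies in $W_h(K,\vec{v}_h,0)$, and split
\begin{align*}
  \vec{F}(\vec{v}_h)-\vec{F}_{\mathrm{RT}_0}(\vec{v}_h)
  =\int_\Omega\vec{f}\cdot(\vec{v}_h-\Pi_{\mathrm{RT}_0}\vec{v}_h)\,\mathrm{d}x
  +\int_\Omega\vec{f}\cdot(\Pi_{\mathrm{RT}_0}\vec{v}_h-I_{\mathrm{RT}_0}\vec{v}_h)\,\mathrm{d}x
  =:\mathrm{(I)}+\mathrm{(II)}.
\end{align*}
Using $\mathrm{div}\,\vec{v}_h=0$ together with Lemma~\ref{lem:approxprops_RTstandard}.(i) and Theorem~\ref{thm:properties}.(i), both $\vec{v}_h-\Pi_{\mathrm{RT}_0}\vec{v}_h$ and $\Pi_{\mathrm{RT}_0}\vec{v}_h-I_{\mathrm{RT}_0}\vec{v}_h$ are divergence-free, $\vec{H}(\mathrm{div},\Omega)$-conforming and have vanishing normal trace on $\partial\Omega$, hence each is $\nabla^\perp$ of a single-valued stream function in $H^1_0(\Omega)$.

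For $\mathrm{(I)}$ I would use that the virtual function $\vec{v}_h$ is a genuine $\vec{H}^1(\Omega)$ field, so $\vec{v}_h=\nabla^\perp\phi$ with $\phi\in H^2(\Omega)\cap H^1_0(\Omega)$ and $|\phi|_{H^2(K)}=\|\nabla\vec{v}_h\|_{L^2(K)}$. By the standard commuting-diagram property of the lowest-order two-dimensional de~Rham complex, the nodal $P_1$-interpolation $I_h$ on the subtriangulation satisfies $\nabla^\perp(I_h\phi)=\Pi_{\mathrm{RT}_0}(\nabla^\perp\phi)=\Pi_{\mathrm{RT}_0}\vec{v}_h$, so $\vec{v}_h-\Pi_{\mathrm{RT}_0}\vec{v}_h=\nabla^\perp(\phi-I_h\phi)$ with $\phi-I_h\phi\in H^1_0(\Omega)$. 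Integrating by parts (no boundary contribution), Cauchy--Schwarz and the second-order nodal interpolation estimate $\|\phi-I_h\phi\|_{L^2(K)}\lesssim h_K^2|\phi|_{H^2(K)}$ then give
\begin{align*}
  \mathrm{(I)}=-\int_\Omega\mathrm{curl}(\vec{f})\,(\phi-I_h\phi)\,\mathrm{d}x
  \lesssim\sum_{K\in\mathcal{T}}h_K^2\|\mathrm{curl}(\vec{f})\|_{L^2(K)}\|\nabla\vec{v}_h\|_{L^2(K)}
  \lesssim\|h_\mathcal{T}^2\,\mathrm{curl}(\vec{f})\|_{L^2}\|\nabla\vec{v}_h\|_{L^2}.
\end{align*}

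The decisive part is $\mathrm{(II)}$, which I would treat by a localisation argument. The point is that on every polygon edge $E\in\mathcal{E}(K)$ both $\Pi_{\mathrm{RT}_0}\vec{v}_h\cdot\vec{n}$ and $I_{\mathrm{RT}_0}\vec{v}_h\cdot\vec{n}$ are the \emph{same} constant $|E|^{-1}\int_E\vec{v}_h\cdot\vec{n}\,\mathrm{d}s$ --- for the standard interpolation by its edge degree of freedom, for the reconstruction by the edge constraint defining $W_h(K,\vec{v}_h,0)$ --- so $\vec{\delta}_h:=\Pi_{\mathrm{RT}_0}\vec{v}_h-I_{\mathrm{RT}_0}\vec{v}_h$ has vanishing normal component across the whole polygon skeleton. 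Hence its (piecewise affine) stream function $\sigma_h\in P_1(\bigcup_K\mathcal{T}(K))\cap H^1_0(\Omega)$ has vanishing tangential derivative on each $\partial K$; being continuous, $\sigma_h$ is constant on each $\partial K$, and global continuity together with $\sigma_h|_{\partial\Omega}=0$ forces $\sigma_h\equiv0$ on \emph{every} $\partial K$. Therefore $\sigma_h|_K\in H^1_0(K)$, and a polygonwise Poincar\'e inequality yields
\begin{align*}
  \|\sigma_h\|_{L^2(K)}\lesssim h_K|\sigma_h|_{H^1(K)}=h_K\|\vec{\delta}_h\|_{L^2(K)}\lesssim h_K^2\|\nabla\vec{v}_h\|_{L^2(K)},
\end{align*}
the last step being exactly the bound $\|\Pi_{\mathrm{RT}_0}\vec{v}_h-I_{\mathrm{RT}_0}\vec{v}_h\|_{L^2(K)}\lesssim h_K\|\nabla\vec{v}_h\|_{L^2(K)}$ established in the proof of Theorem~\ref{thm:properties}. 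One more integration by parts then gives $\mathrm{(II)}=-\int_\Omega\mathrm{curl}(\vec{f})\,\sigma_h\,\mathrm{d}x\lesssim\|h_\mathcal{T}^2\,\mathrm{curl}(\vec{f})\|_{L^2}\|\nabla\vec{v}_h\|_{L^2}$. Adding $\mathrm{(I)}$ and $\mathrm{(II)}$, dividing by $\|\nabla\vec{v}_h\|_{L^2}$ and taking the supremum over $\vec{v}_h\in\vec{V}_{0,h}\setminus\{0\}$ finishes the proof.

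The main obstacle is precisely the estimate for $\mathrm{(II)}$: a naive global stream-function bound controls only $\|\sigma_h\|_{L^2(\Omega)}\lesssim C_\Omega|\sigma_h|_{H^1(\Omega)}\lesssim h\|\nabla\vec{v}_h\|_{L^2}$, which loses one power of $h$ and would merely yield first-order, non-uniform consistency. The gain hinges on the structural fact that the computable reconstruction and the (non-computable) standard interpolation carry identical normal fluxes on the polygon mesh skeleton, so their difference ``lives inside the polygons'' and admits the polygonwise Poincar\'e estimate; the remaining ingredients (commuting diagram, $H^2$-regularity of the stream function of a genuine $\vec{H}^1$ test function, Bramble--Hilbert) are routine. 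I would also note that nothing in the argument restricts $k$, so $m=0$ always produces a second-order pressure-robust consistency error --- optimal only for $k=2$, which is why the companion pressure estimate then degrades to first order.
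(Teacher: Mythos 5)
Your argument is correct, and it reaches the estimate by a genuinely different (and more granular) route than the paper. The paper's proof works with a single stream function $\psi$ of $\vec{v}_h - I_{\mathrm{RT}_0}\vec{v}_h$, writes $\vec{v}_h - I_{\mathrm{RT}_0}\vec{v}_h = \mathrm{curl}(\psi - I_\mathcal{L}\psi)$ via the commuting-diagram identity $\mathrm{curl}(I_\mathcal{L}\psi) = \Pi_{\mathrm{RT}_0}(\mathrm{curl}\psi) = 0$, integrates by parts once, and collects the two powers of $h$ as (first-order nodal interpolation estimate for $\psi$) times (the first-order bound $\|\mathrm{curl}\psi\|_{L^2} = \|\vec{v}_h - I_{\mathrm{RT}_0}\vec{v}_h\|_{L^2}\lesssim h\|\nabla\vec{v}_h\|_{L^2}$ from Theorem~\ref{thm:properties}). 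You instead insert the standard interpolation $\Pi_{\mathrm{RT}_0}\vec{v}_h$, handle $\mathrm{(I)}$ with the globally $H^2$ stream function of $\vec{v}_h$ and a second-order nodal estimate, and handle $\mathrm{(II)}$ by observing that the standard interpolation and the reconstruction carry identical normal fluxes on the polygon skeleton, so their difference admits a polygonwise $H^1_0(K)$ stream function and a local Poincar\'e inequality. What your version buys is precisely a justification of the step the paper glosses over: on a non-triangular polygon one has $\Pi_{\mathrm{RT}_0}(\mathrm{curl}\psi) = \Pi_{\mathrm{RT}_0}\vec{v}_h - I_{\mathrm{RT}_0}\vec{v}_h$, which does \emph{not} vanish in general (Lemma~\ref{lem:non-empty} identifies the two operators only on triangles); your term $\mathrm{(II)}$ is exactly this discrepancy, and your localisation argument shows it is nonetheless of order $h^2$. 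Your split also sidesteps the paper's slightly inaccurate assertion that $\psi\in H^2(\Omega)$ --- the stream function of $\vec{v}_h - I_{\mathrm{RT}_0}\vec{v}_h$ is only piecewise $H^2$ because $I_{\mathrm{RT}_0}\vec{v}_h$ is merely $\vec{H}(\mathrm{div})$-conforming, whereas your $\phi$ with $\nabla^\perp\phi=\vec{v}_h\in\vec{H}^1(\Omega)$ genuinely lies in $H^2(\Omega)$. Both arguments rest on the same implicit hypotheses (simply connected $\Omega$, shape-regular subtriangulations, no extra nodes on $\partial K$ so that each polygon edge is a single subtriangle edge), and your closing remark about $m=0$ giving second order for every $k$, hence optimality only for $k=2$ and a degraded pressure, matches the paper's subsequent remark.
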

\begin{proof}
  The proof is based on the same idea as the proof in \cite[Theorem 7 (for \(I
  = 1\) and \(\Pi = I_{\mathrm{RT}_0}\))]{Lederer2019} . Indeed, due to \(\vec{v}_h \in
  \vec{V}_{0,h}\) and \(I_{\mathrm{RT}_0} \vec{v}_h\) being divergence-free, it holds \(\vec{v}_h
  - I_{\mathrm{RT}_0} \vec{v}_h = \mathrm{curl} \psi\) for some \(\psi \in H^1_0(\Omega) \cap
  H^2(\Omega)\), and hence
  \begin{align*}
    \vec{F}(\vec{v}_h) - \vec{F}_{\mathrm{RT}_0}(\vec{v}_h)
  & = \int_\Omega \vec{f} \cdot (\vec{v}_h - I_{\mathrm{RT}_0} \vec{v}_h) \, \mathrm{d}x\\
  & = \int_\Omega \vec{f} \cdot \mathrm{curl}(\psi - I_\mathcal{L} \psi) \, \mathrm{d}x\\
    & = -\int_\Omega \mathrm{curl} (\vec{f}) \cdot (\psi - I_\mathcal{L} \psi) \, \mathrm{d}x
    \end{align*}
  where \(I_\mathcal{L} \psi\) is the nodal interpolation due to the commuting properties of the de Rham complex (on subtriangles) \(\mathrm{curl}(I_\mathcal{L} \psi) = I_{\mathrm{RT}_0}(\mathrm{curl} \psi) = 0\). Standard elementwise interpolation estimates then result in
  \begin{align*}
    \vec{F}(\vec{v}_h) - \vec{F}_{\mathrm{RT}_0}(\vec{v}_h) & \lesssim \| h^2_\mathcal{T} \mathrm{curl}(\vec{f}) \|_{L^2} \| h^{-2}_\mathcal{T} (\psi - I_\mathcal{L} \psi) \|_{L^2}\\
    & \lesssim \| h^2_\mathcal{T} \mathrm{curl}(\vec{f}) \|_{L^2} \| h^{-1}_\mathcal{T} \nabla \psi \|_{L^2}\\
    & = \| h^2_\mathcal{T} \mathrm{curl}(\vec{f}) \|_{L^2} \| h^{-1}_\mathcal{T} \mathrm{curl} \psi \|_{L^2}\\
    & = \| h^2_\mathcal{T} \mathrm{curl}(\vec{f}) \|_{L^2} \| h^{-1}_\mathcal{T} (\vec{v}_h - I_{\mathrm{RT}_0} \vec{v}_h) \|_{L^2}\\
    & \lesssim \| h^2_\mathcal{T} \mathrm{curl}(\vec{f}) \|_{L^2} \| \nabla \vec{v}_h \|_{L^2}.
  \end{align*}  
  This concludes the proof.
\end{proof}

\begin{remark}
  In the case $k=2$ this results in a pressure-robust velocity discretisation that converges with the optimal order. The pressure error however may convergence suboptimally, since this relates to testing with non-divergence-free functions. In other words, no improved estimate for the full dual norm
\(\| \vec{F}(\vec{v}_h) - \vec{F}_{\mathrm{RT}_0}(\vec{v}_h) \|_{\vec{V}_{h}}\) is possible, which is also confirmed by the numerical experiments below.
\end{remark}

\section{Numerical experiments}\label{sec:Numerics}
This Section studies three numerical examples to confirm that
the new approach has optimal convergence rates and is really pressure-robust, opposite to the classical and the
enhanced VEM.
To conduct the experiments the lowest-order VEM with $k=2$ is implemented
allowing for a $RT_1$ and $RT_0$ reconstruction as described in Sections~\ref{sec:ReconstructionPolygons} and \ref{sec:aprioriprobust} to keep the optimal order of
convergence with respect to the velocity.

Since the discrete solution $\vec{u}_h$ is still virtual, the errors between the exact
solution $\vec{u}$ and the projection $\Pi_2^\nabla\vec{u}_h$ are computed,
i.e.
\begin{align*}
  \Vert \nabla (\vec{u}-\Pi_2^\nabla\vec{u}_h)\Vert_{L^2}.
\end{align*}
Moreover, the error will be computed on a series of meshes with different number
of degrees of freedom \texttt{ndof}
% different meshes with mean
% diameter\todo{Mean diameter or supremum as in theory? For uniform refinements
% they are related so it does not really matter? answer: ndof please}
% \begin{align*}
%   h_m:= \frac{1}{\vert \mathcal{T}\vert} \sum^{}_{K\in\mathcal{T}}h_K
% \end{align*}
to gain convergence rates with respect to \(\texttt{ndof}^{-1/2}\).

In these examples the numerical domain $\Omega = [0,1]^2$ is partitioned into
a series of meshes $\mathcal{T}_0,\mathcal{T}_1,\mathcal{T}_2,\ldots $ with the following structure:
% The first series $\{\mathcal{T}_h^\text{S}\}_h$
% consists of squares. The second series $\{\mathcal{T}_h^{\text{NC}}\}_h$ of meshes is made from triangles and
% non-convex polygons.
The unit square is divided into four parts with equal size. The first part
consists of (deterministically) distorted quadrilaterals whereas the second part is made of smaller regular squares. Triangles and non-convex pentagons are used to build the third part. Last but not least, the fourth part is constructed using L-shaped polygons
including hanging nodes (after the first refinement) and regular squares, see
figure~\ref{fig:meshes} for the first three meshes.

\begin{figure}[t]
\begin{minipage}[h]{0.32\textwidth}
\centering
\begin{tikzpicture}[]
\clip (-1.13,-.9) rectangle (3.825,3.20);
\begin{axis}[
	xtick={0,.5,...,1}, ytick={0,.5,...,1},
	axis background/.style={fill=white},
	axis x line*=bottom,axis y line*=left,
	xlabel = $x$,
	ylabel = $y$,
	% axis line shift = 10pt,
	scale=.590,
]\addplot[%
patch,%
darkblue,%
% faceted color = blue,%
line width    = .1pt,%
table/row sep = \\,
	patch type = line,
	patch table={
0 13\\
0 16\\
1 17\\
1 18\\
2 20\\
2 21\\
3 23\\
3 24\\
4 14\\
4 15\\
4 19\\
4 22\\
4 26\\
5 13\\
5 14\\
5 17\\
6 18\\
6 19\\
6 20\\
7 21\\
7 22\\
7 23\\
8 15\\
8 16\\
8 24\\
8 25\\
9 13\\
9 14\\
9 15\\
9 16\\
10 14\\
10 17\\
10 18\\
10 19\\
11 19\\
11 22\\
12 15\\
12 22\\
12 23\\
12 24\\
12 27\\
12 28\\
15 25\\
15 26\\
22 27\\
24 28\\
		}]
			table[row sep=\\] {
x y\\
0.000000 0.000000\\
1.000000 0.000000\\
1.000000 1.000000\\
0.000000 1.000000\\
0.500000 0.500000\\
0.500000 0.000000\\
1.000000 0.500000\\
0.500000 1.000000\\
0.000000 0.500000\\
0.300000 0.300000\\
0.750000 0.250000\\
0.750000 0.750000\\
0.250000 0.750000\\
0.250000 0.000000\\
0.500000 0.250000\\
0.250000 0.500000\\
0.000000 0.250000\\
0.750000 0.000000\\
1.000000 0.250000\\
0.750000 0.500000\\
1.000000 0.750000\\
0.750000 1.000000\\
0.500000 0.750000\\
0.250000 1.000000\\
0.000000 0.750000\\
0.125000 0.625000\\
0.375000 0.625000\\
0.375000 0.875000\\
0.125000 0.875000\\
			};
\end{axis}
\end{tikzpicture}
\subcaption{intial mesh $\mathcal{T}_{0}$}
\end{minipage}
\hfill
\begin{minipage}[h]{0.32\textwidth}
\centering
\begin{tikzpicture}[]
\clip (-1.13,-.9) rectangle (3.825,3.20);
\begin{axis}[
	xtick={0,.5,...,1}, ytick={0,.5,...,1},
	axis background/.style={fill=white},
	axis x line*=bottom,axis y line*=left,
	xlabel = $x$,
	ylabel = $y$,
	% axis line shift = 10pt,
	scale=.590,
]\addplot[%
patch,%
darkblue,%
% faceted color = blue,%
line width    = .1pt,%
table/row sep = \\,
	patch type = line,
	patch table={
0 41\\
0 44\\
1 56\\
1 57\\
2 68\\
2 69\\
3 79\\
3 80\\
4 48\\
4 49\\
4 62\\
4 65\\
4 86\\
5 45\\
5 46\\
5 53\\
6 59\\
6 60\\
6 66\\
7 71\\
7 72\\
7 77\\
8 51\\
8 52\\
8 75\\
8 81\\
9 42\\
9 43\\
9 47\\
9 50\\
10 54\\
10 55\\
10 58\\
10 61\\
11 63\\
11 64\\
11 67\\
11 70\\
12 73\\
12 74\\
12 76\\
12 78\\
12 89\\
12 94\\
13 41\\
13 42\\
13 45\\
14 46\\
14 47\\
14 48\\
14 55\\
15 49\\
15 50\\
15 51\\
15 73\\
15 82\\
15 85\\
16 43\\
16 44\\
16 52\\
17 53\\
17 54\\
17 56\\
18 57\\
18 58\\
18 59\\
19 60\\
19 61\\
19 62\\
19 63\\
20 66\\
20 67\\
20 68\\
21 69\\
21 70\\
21 71\\
22 64\\
22 65\\
22 72\\
22 76\\
22 90\\
23 77\\
23 78\\
23 79\\
24 74\\
24 75\\
24 80\\
24 93\\
25 41\\
25 42\\
25 43\\
25 44\\
26 42\\
26 45\\
26 46\\
26 47\\
27 47\\
27 48\\
27 49\\
27 50\\
28 43\\
28 50\\
28 51\\
28 52\\
29 46\\
29 53\\
29 54\\
29 55\\
30 54\\
30 56\\
30 57\\
30 58\\
31 58\\
31 59\\
31 60\\
31 61\\
32 48\\
32 55\\
32 61\\
32 62\\
33 62\\
33 65\\
34 60\\
34 63\\
35 67\\
35 70\\
36 64\\
36 72\\
37 51\\
37 73\\
37 74\\
37 75\\
37 83\\
37 84\\
38 49\\
38 65\\
38 73\\
38 76\\
38 87\\
38 88\\
39 72\\
39 76\\
39 77\\
39 78\\
39 91\\
39 92\\
40 74\\
40 78\\
40 79\\
40 80\\
40 95\\
40 96\\
49 85\\
49 86\\
51 81\\
51 82\\
65 87\\
72 91\\
73 83\\
73 88\\
74 93\\
74 94\\
75 84\\
76 89\\
76 90\\
78 92\\
78 95\\
80 96\\
		}]
			table[row sep=\\] {
x y\\
0.000000 0.000000\\
1.000000 0.000000\\
1.000000 1.000000\\
0.000000 1.000000\\
0.500000 0.500000\\
0.500000 0.000000\\
1.000000 0.500000\\
0.500000 1.000000\\
0.000000 0.500000\\
0.275000 0.275000\\
0.750000 0.250000\\
0.750000 0.750000\\
0.250000 0.750000\\
0.250000 0.000000\\
0.500000 0.250000\\
0.250000 0.500000\\
0.000000 0.250000\\
0.750000 0.000000\\
1.000000 0.250000\\
0.750000 0.500000\\
1.000000 0.750000\\
0.750000 1.000000\\
0.500000 0.750000\\
0.250000 1.000000\\
0.000000 0.750000\\
0.121919 0.089779\\
0.354721 0.153961\\
0.409151 0.365849\\
0.092957 0.360058\\
0.625000 0.125000\\
0.875000 0.125000\\
0.875000 0.375000\\
0.625000 0.375000\\
0.625000 0.625000\\
0.875000 0.625000\\
0.875000 0.875000\\
0.625000 0.875000\\
0.125000 0.625000\\
0.375000 0.625000\\
0.375000 0.875000\\
0.125000 0.875000\\
0.125000 0.000000\\
0.264942 0.157043\\
0.134151 0.215849\\
0.000000 0.125000\\
0.375000 0.000000\\
0.500000 0.125000\\
0.346039 0.270279\\
0.500000 0.375000\\
0.375000 0.500000\\
0.285221 0.378081\\
0.125000 0.500000\\
0.000000 0.375000\\
0.625000 0.000000\\
0.750000 0.125000\\
0.625000 0.250000\\
0.875000 0.000000\\
1.000000 0.125000\\
0.875000 0.250000\\
1.000000 0.375000\\
0.875000 0.500000\\
0.750000 0.375000\\
0.625000 0.500000\\
0.750000 0.625000\\
0.625000 0.750000\\
0.500000 0.625000\\
1.000000 0.625000\\
0.875000 0.750000\\
1.000000 0.875000\\
0.875000 1.000000\\
0.750000 0.875000\\
0.625000 1.000000\\
0.500000 0.875000\\
0.250000 0.625000\\
0.125000 0.750000\\
0.000000 0.625000\\
0.375000 0.750000\\
0.375000 1.000000\\
0.250000 0.875000\\
0.125000 1.000000\\
0.000000 0.875000\\
0.062500 0.562500\\
0.187500 0.562500\\
0.187500 0.687500\\
0.062500 0.687500\\
0.312500 0.562500\\
0.437500 0.562500\\
0.437500 0.687500\\
0.312500 0.687500\\
0.312500 0.812500\\
0.437500 0.812500\\
0.437500 0.937500\\
0.312500 0.937500\\
0.062500 0.812500\\
0.187500 0.812500\\
0.187500 0.937500\\
0.062500 0.937500\\
			};
\end{axis}
\end{tikzpicture}
\subcaption{first refinement $\mathcal{T}_{1}$}
\end{minipage}
\hfill
\begin{minipage}[h]{0.32\textwidth}
\centering
\input{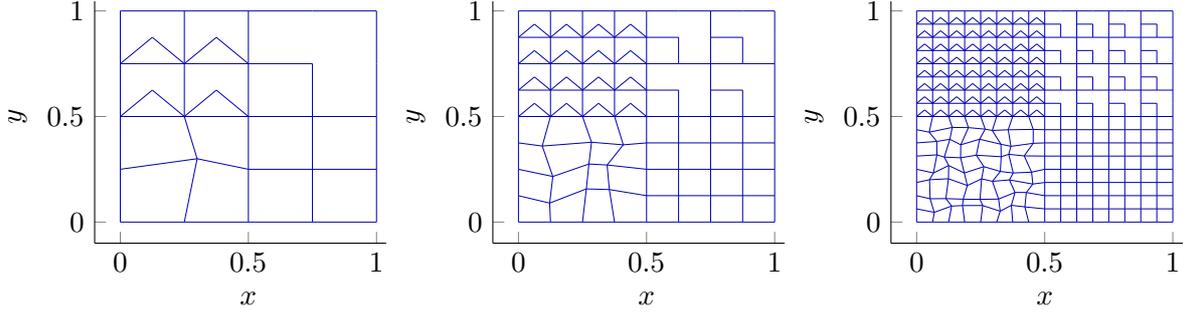}
\subcaption{second refinement $\mathcal{T}_{2}$}
\end{minipage}
\caption[]{First three levels of meshes used for the computations.}
\label{fig:meshes}
\end{figure}

\subsection{Hydrostatic problem with different viscosities}%
\label{sub:hydrostatic_problem_with_small_viscosities}
The first experiment is performed only on the third mesh $\mathcal{T}_2$
of Figure~\ref{fig:meshes}.
\begin{figure}[t]
\begin{minipage}[h]{0.99\textwidth}
\centering
\begin{tikzpicture}[]
\begin{loglogaxis}[
	legend style ={legend columns=-1,at={(73,0.00)},anchor=north,},
  grid=none,
	xmin = .1, xmax = 1,
	ymin = .1, ymax = 1,
	scale = 0.01,
	axis background/.style={fill=white},
axis line style={draw=none},
ytick=\empty,
xtick=\empty,
% tick style={draw=none},
% tick label style={},label style={font=\footnotesize},
]
  \addlegendimage{black,mark=*,dashed}
\addlegendentry{CVEM$\quad$}
  \addlegendimage{blue,mark=square*,dashed}
\addlegendentry{EVEM$\quad$}
  \addlegendimage{orange,mark=triangle*,dashed,mark size=2.8pt}
\addlegendentry{PRVEM\textsubscript{1}$\quad$}
  \addlegendimage{magenta,mark=diamond*,dashed,mark size=2.8pt}
\addlegendentry{PRVEM\textsubscript{0}}

\addplot[white, mark =diamond*, mark size =0.01pt] coordinates{
    (1, 1)
  };
\end{loglogaxis}

	\end{tikzpicture}
\end{minipage}
\vfill

\begin{minipage}[h]{0.99\textwidth}
\centering
\begin{tikzpicture}[]
% \clip (-1.50,-1.1) rectangle (5.85,5.00);
\begin{loglogaxis}[
	xlabel = $\nu$, ylabel = $\Vert \nabla (\vec{u}-\Pi_2^\nabla\vec{u}_h)\Vert_{L^2}$,
  grid=major,
	% ymin = 1e-18, ymax = 1e4,
	xtickten={-6,-4,...,0},
	ytickten={-18,-14,...,4},
	scale = 0.85,
	axis background/.style={fill=white},
]

\addplot[black, mark =*, dashed] coordinates{
    (1.0000000, 0.003520041966177529270)
    (0.1000000, 0.035200419661775304847)
    (0.0100000, 0.352004196617752951326)
    (0.0010000, 3.520041966177523740100)
    (0.0001000, 35.200419661775235624646)
    (0.0000100, 352.004196617753279952012)
    (0.0000010, 3520.041966177531321591232)
  };
\addplot[blue, mark =square*,dashed] coordinates{
    (1.0000000, 0.000004156042764775312)
    (0.1000000, 0.000041560427647742036)
    (0.0100000, 0.000415604276477551167)
    (0.0010000, 0.004156042764774126388)
    (0.0001000, 0.041560427647742211044)
    (0.0000100, 0.415604276477433531856)
    (0.0000010, 4.156042764772785780281)
  };
\addplot[orange, mark =triangle*, mark size =2.8pt,dashed] coordinates{
    (1.0000000, 0.000000000000000040394)
    (0.1000000, 0.000000000000000386485)
    (0.0100000, 0.000000000000003798741)
    (0.0010000, 0.000000000000038609888)
    (0.0001000, 0.000000000000422099640)
    (0.0000100, 0.000000000004007088698)
    (0.0000010, 0.000000000038623635992)
  };
\addplot[magenta, mark =diamond*, mark size =2.8pt,dashed] coordinates{
    (1.0000000, 0.000000000000000030196)
    (0.1000000, 0.000000000000000344761)
    (0.0100000, 0.000000000000003227088)
    (0.0010000, 0.000000000000033153613)
    (0.0001000, 0.000000000000323200066)
    (0.0000100, 0.000000000002862669720)
    (0.0000010, 0.000000000031796614752)
  };
  % \legend{CVEM, EVEM, PRVEM\textsubscript{1},PRVEM\textsubscript{0}}
\end{loglogaxis}

	\end{tikzpicture}
\end{minipage}
\caption[]{Dependence of the velocity error on the viscosity computed on the
  second refinement $\mathcal{T}_{2}$ of
  $\mathcal{T}_{0}$ for different right-hand side
discretisations.}
\label{fig:hydrostatic}
\end{figure}
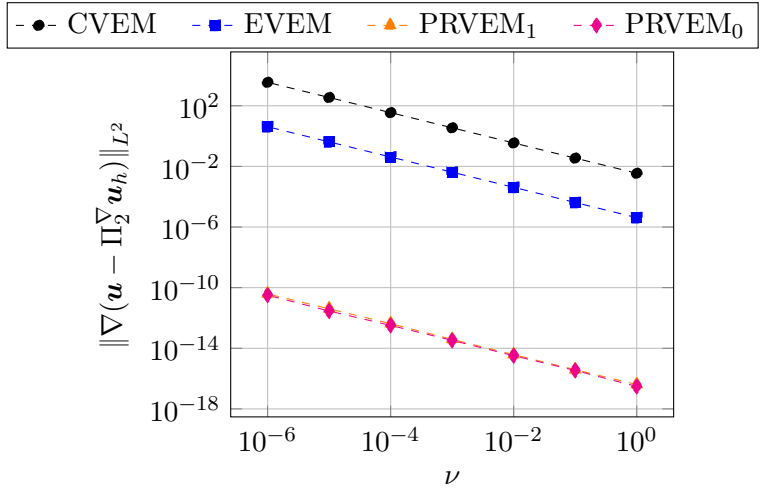

The continuous right-hand side is chosen, such that the exact solution reads
\begin{align*}
  \vec{u}=\vec{0}\in\vec{V}_h,\quad\text{and}\quad p(x,y)= \sum^{7}_{j=0}
  x^jy^{7-j}- \frac{761}{1260} \notin Q_h.
\end{align*}

To show the lack of pressure-robustness of the classical and enhanced VEM the viscosity is varied between $\nu
= 10^{0}, 10^{-1},\ldots,10^{-6}$.

All four different right-hand side discretisations are tested: The classical
virtual element method (CVEM), the enhanced virtual element method (EVEM),
the new pressure-robust version with $\mathrm{RT}_1$ reconstruction
(PRVEM\textsubscript{1}), and the pressure-robust version using the
$\mathrm{RT}_0$ reconstruction (PRVEM\textsubscript{0}).

Figure~\ref{fig:hydrostatic} shows the error of the four methods versus the viscosity and visualises the lack of pressure robustness of the classical and the enhanced VEM. The error of the pressure-robust versions are almost zero. The reason that they are not closer to machine precision is that the solver only ensures that the product of the velocity error times $\nu$ is close to machine precision.

\subsection{Vorticity problem with constant viscosity}%
\label{sub:vorticity_problem_with_constant_viscosity}
The second experiment is conducted on the series of meshes $\mathcal{T}_0,\mathcal{T}_1,\mathcal{T}_2,\ldots$ of Figure~\ref{fig:meshes} to obtain convergence rates.

The right-hand side is set in such a way that the exact solution is given by
\begin{align*}
  \vec{u}(x,y)&= \begin{pmatrix}
    - \partial/\partial y\\
    \partial/\partial x
  \end{pmatrix}
  \left(x^2(x-1)^2y^2(y-1)^2\right)\notin \vec{V}_h,
  \quad\text{and}\quad
  p(x,y)=\sin{(2\pi x)}\cos{(2\pi y)}\notin Q_h.
\end{align*}

For two viscosities $\nu = 1,$ $\nu=0.0001$ the discrete solutions
are computed on the first seven levels.
The convergence rates for the different viscosities with respect to the total number of degrees of freedom $\texttt{ndof}^{-1/2} \approx h$ are shown in Figure~\ref{fig:exp2}.

\begin{figure}[bt]
\begin{minipage}[h]{0.99\textwidth}
\centering
\begin{tikzpicture}[]
\begin{loglogaxis}[
	legend style ={legend columns=-1,at={(73,0.00)},anchor=north,},
  grid=none,
	xmin = .1, xmax = 1,
	ymin = .1, ymax = 1,
	scale = 0.01,
	axis background/.style={fill=white},
axis line style={draw=none},
ytick=\empty,
xtick=\empty,
% tick style={draw=none},
% tick label style={},label style={font=\footnotesize},
]
  \addlegendimage{black,mark=*,dashed}
\addlegendentry{CVEM$\quad$}
  \addlegendimage{blue,mark=square*,dashed}
\addlegendentry{EVEM$\quad$}
  \addlegendimage{orange,mark=triangle*,dashed,mark size=2.8pt}
\addlegendentry{PRVEM\textsubscript{1}$\quad$}
  \addlegendimage{magenta,mark=diamond*,dashed,mark size=2.8pt}
\addlegendentry{PRVEM\textsubscript{0}}

\addplot[white, mark =diamond*, mark size =0.01pt] coordinates{
    (1, 1)
  };
\end{loglogaxis}

	\end{tikzpicture}
\end{minipage}
\vfill

\begin{minipage}[h]{0.49\textwidth}
\centering
\begin{tikzpicture}[]
% \clip (-1.50,-1.1) rectangle (5.85,5.00);
\begin{loglogaxis}[
  xlabel = \texttt{ndof}, ylabel = $\Vert \nabla (\vec{u}-\Pi_2^\nabla\vec{u}_h)\Vert_{L^2}$,
	legend style ={legend pos =north east},
  grid=major,
	ymin = 1e-15, ymax = 5e3,
	% xtickten={-6,-4,...,0},
	ytickten={-16,-14,...,6},
  minor y tick style = white,
  minor x tick style = white,
	scale = 0.85,
	axis background/.style={fill=white},
]
\addplot[black, mark =*, dashed] coordinates{
	(177   , 0.053546040257895513581)
	(763   , 0.013725694607277599824)
	(3171  , 0.003597476162269952309)
	(12931 , 0.000922973026567141933)
	(52227 , 0.000233802762075492170)
	(209923, 0.000058856967276728021)
	(841731, 0.000014766713047538734)
};
\addplot[blue, mark =square*,dashed] coordinates{
	(177   , 0.000324419231041478103)
	(763   , 0.000019982159386166272)
	(3171  , 0.000001120104052281499)
	(12931 , 0.000000064489274332437)
	(52227 , 0.000000003833444199779)
	(209923, 0.000000000232952661399)
	(841731, 0.000000000015633032160)
};
\addplot[orange, mark =triangle*, mark size =2.8pt,dashed] coordinates{
	(177   , 0.000000000000005511657)
	(763   , 0.000000000000014229010)
	(3171  , 0.000000000000046282274)
	(12931 , 0.000000000000214333896)
	(52227 , 0.000000000000587901492)
	(209923, 0.000000000001390576695)
	(841731, 0.000000000006291321391)
};
\addplot[magenta, mark =diamond*, mark size =2.8pt,dashed] coordinates{
	(177   , 0.000000000000005746897)
	(763   , 0.000000000000009626778)
	(3171  , 0.000000000000048109471)
	(12931 , 0.000000000000125628363)
	(52227 , 0.000000000000188612879)
	(209923, 0.000000000001331218731)
	(841731, 0.000000000006956203077)
};
	\draw[slopetriangle]
		(axis cs: 209923,0.00001)
		-- (axis cs: 52227,4.0194e-05)
		-- (axis cs: 52227,0.00001) node [midway,left]{\scriptsize\(2\)}
		-- cycle node [midway,below]{\scriptsize\(1\)};
	\draw[slopetriangle]
		(axis cs: 209923,5e-11)
		-- (axis cs: 52227,8.0779e-10)
		-- (axis cs: 52227,5e-11) node [midway,left]{\scriptsize\(4\)}
		-- cycle node [midway,below]{\scriptsize\(1\)};
	% \legend{CVEM, EVEM, PRVEM\textsubscript{1},PRVEM\textsubscript{0}}
	\end{loglogaxis}

  \node[draw,fill=white] at (5.05,4.435) {{$\nu = 1$}};

	\end{tikzpicture}
\subcaption{moderate viscosity}
\end{minipage}
\hfill
\begin{minipage}[h]{0.49\textwidth}
\centering
\begin{tikzpicture}[]
% \clip (-1.50,-1.1) rectangle (5.85,5.00);
\begin{loglogaxis}[
  xlabel = \texttt{ndof}, ylabel = $\Vert \nabla (\vec{u}-\Pi_2^\nabla\vec{u}_h)\Vert_{L^2}$,
	legend style ={legend pos =north east},
  grid=major,
	ymin = 1e-15, ymax = 5e3,
	% xtickten={-6,-4,...,0},
	ytickten={-16,-14,...,6},
  minor y tick style = white,
  minor x tick style = white,
	scale = 0.85,
	axis background/.style={fill=white},
]
\addplot[black, mark =*, dashed] coordinates{
	(177   , 535.460402578922639804658)
	(763   , 137.256946072764776545228)
	(3171  , 35.974761622645722525249)
	(12931 , 9.229730265872984062980)
	(52227 , 2.338027621494903840471)
	(209923, 0.588569675203101216887)
	(841731, 0.147667131210844237987)
};
\addplot[blue, mark =square*,dashed] coordinates{
	(177   , 3.244192310411761148714)
	(763   , 0.199821593856388368682)
	(3171  , 0.011201040500879526005)
	(12931 , 0.000644892781389118440)
	(52227 , 0.000038334497097490282)
	(209923, 0.000002329565362218527)
	(841731, 0.000000143437689334260)
};
\addplot[orange, mark =triangle*, mark size =2.8pt,dashed] coordinates{
	(177   , 0.000000000002278929911)
	(763   , 0.000000000001425616135)
	(3171  , 0.000000000001294124924)
	(12931 , 0.000000000001142703593)
	(52227 , 0.000000000001156056430)
	(209923, 0.000000000002042417644)
	(841731, 0.000000000008942530326)
};
\addplot[magenta, mark =diamond*, mark size =2.8pt,dashed] coordinates{
	(177   , 0.000000000001453999509)
	(763   , 0.000000000001623249671)
	(3171  , 0.000000000001121609086)
	(12931 , 0.000000000000958963292)
	(52227 , 0.000000000000949740615)
	(209923, 0.000000000001747777618)
	(841731, 0.000000000009504916807)
};
	\draw[slopetriangle]
		(axis cs: 209923,4e-7)
		-- (axis cs: 52227,6.4623e-6)
		-- (axis cs: 52227,4e-7) node [midway,left]{\scriptsize\(4\)}
		-- cycle node [midway,below]{\scriptsize\(1\)};
	\draw[slopetriangle]
		(axis cs: 209923,1e-1)
		-- (axis cs: 52227,0.4019)
		-- (axis cs: 52227,1e-1) node [midway,left]{\scriptsize\(2\)}
		-- cycle node [midway,below]{\scriptsize\(1\)};
	% \legend{CVEM, EVEM, PRVEM\textsubscript{1},PRVEM\textsubscript{0}}
	\end{loglogaxis}
  \node[draw,fill=white, anchor=west] at (3.585,4.44) {{$\nu = 0.0001$}};

	\end{tikzpicture}
\subcaption{small viscosity}
\end{minipage}

% \begin{minipage}[h]{0.49\textwidth}
% \centering
% \input{plots/exp2/errVsH_nu=1_S.tex}
% \subcaption{$\{\mathcal{T}_{h}^\text{S}\}_h$}
% \end{minipage}
% \hfill
% \begin{minipage}[h]{0.49\textwidth}
% \centering
% \input{plots/exp2/errVsH_nu=0.0001_S.tex}
% \subcaption{$\{\mathcal{T}_{h}^\text{S}\}_h$}
% \end{minipage}

% \begin{minipage}[h]{0.49\textwidth}
% \centering
% \input{plots/exp2/errVsH_nu=1_NC.tex}
% \subcaption{$\{\mathcal{T}_{h}^\text{NC}\}_h$}
% \end{minipage}
% \hfill
% \begin{minipage}[h]{0.49\textwidth}
% \centering
% \input{plots/exp2/errVsH_nu=0.0001_NC.tex}
% \subcaption{$\{\mathcal{T}_{h}^\text{NC}\}_h$}
% \end{minipage}

% \begin{minipage}[h]{0.49\textwidth}
% \centering
% \input{plots/exp2/errVsH_nu=1_Un.tex}
% \subcaption{$\{\mathcal{T}_{h}^\text{Un}\}_h$}
% \end{minipage}
% \hfill
% \begin{minipage}[h]{0.49\textwidth}
% \centering
% \input{plots/exp2/errVsH_nu=0.0001_Un.tex}
% \subcaption{$\{\mathcal{T}_{h}^\text{Un}\}_h$}
% \end{minipage}

\caption[]{Convergence rates of the velocity for the second experiment with two
different viscosities.}
\label{fig:exp2}
\end{figure}
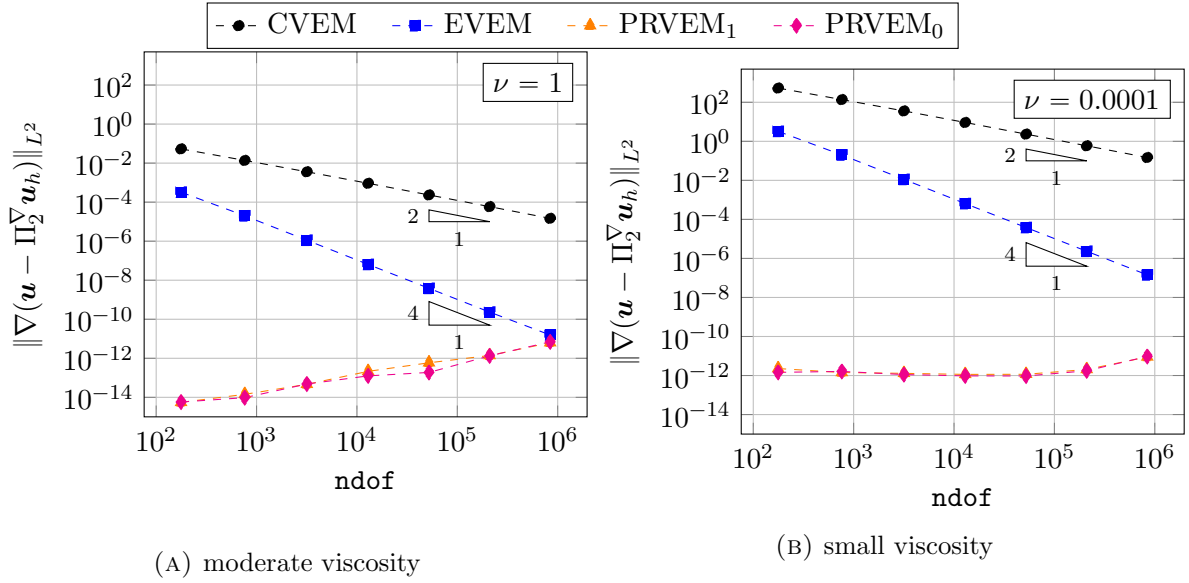
\begin{table}[b]
\centering
\caption[]{Total number of degrees of freedom, pressure error \(\| p - p_h \|_{L^2}\) and convergence rates for the
experiment of Subsection~\ref{sub:vorticity_problem_with_constant_viscosity} for viscosity $\nu = 1$.}
\label{tab:ConvRatesPressureExp2}
\begin{tabular}{c|cc|cc|cc|cc}
  &\multicolumn{2}{c|}{CVEM}&\multicolumn{2}{c|}{EVEM}
  & \multicolumn{2}{c|}{PRVEM\textsubscript{1}}
  & \multicolumn{2}{c}{PRVEM\textsubscript{0}} \\
  \texttt{ndof} &\centering error & rate &\centering  error & rate
      & error & rate &\centering  error & rate \\\hline
  177    & $ 1.939 \cdot 10^{-1}$ & -    & $ 1.367 \cdot 10^{-1} $ & -     & $ 1.367 \cdot 10^{-1} $ & -    & $ 2.099 \cdot 10^{-1} $ & - \\
  763    & $ 8.535 \cdot 10^{-2}$ & 1.12 & $ 4.656 \cdot 10^{-2} $ & 1.47 & $ 4.655 \cdot 10^{-2} $ & 1.47 & $ 1.072 \cdot 10^{-1} $ & 0.92 \\
  3171   & $ 2.407 \cdot 10^{-2}$ & 1.78 & $ 1.227 \cdot 10^{-2} $ & 1.87 & $ 1.227 \cdot 10^{-2} $ & 1.87 & $5.196 \cdot 10^{-2} $ & 1.02 \\
  12931  & $ 6.257 \cdot 10^{-3}$ & 1.92 & $ 3.126 \cdot 10^{-3} $ & 1.95 & $ 3.126 \cdot 10^{-3} $ & 1.95 & $2.576 \cdot 10^{-2} $ & 1.00 \\
  52227  & $ 1.579 \cdot 10^{-3}$ & 1.97 & $ 7.849 \cdot 10^{-4} $ & 1.98 & $ 7.848 \cdot 10^{-4} $ & 1.98 & $1.289 \cdot 10^{-2} $ & 0.99 \\
  209923 & $ 3.959 \cdot 10^{-4}$ & 1.99 & $ 1.965 \cdot 10^{-4} $ & 1.99 & $ 1.964 \cdot 10^{-4} $ & 1.99 & $6.447 \cdot 10^{-3} $ & 1.00 \\
  841731 & $ 9.907 \cdot 10^{-5}$ & 2.00 & $ 4.913 \cdot 10^{-5} $ & 2.00 & $ 4.913 \cdot 10^{-5} $ & 2.00 & $3.225 \cdot 10^{-3} $ & 1.00 \\
\end{tabular}
\end{table}

All methods converge with their expected rates. In particular, the new pressure-robust versions provide significant better
results for small viscosities compared to the classical VEM and the
enhanced version on coarse grids. As stated earlier the enhanced VEM converges
with a convergence rate of $4$ as long as the right-hand side discretisation error is dominant, and hence is asymptotically pressure-robust.

  As mentioned in Subsection \ref{subsec:RT0} the pressure computed by the
  pressure-robust VEM with $\mathrm{RT}_0$-reconstruction converges only with
  order $1$ in contrast to
  all the other discretisations which lead to
  an expected convergence rate of $2$. The pressure error and the rate for the
  different versions computed for $\nu = 1$
  can be found in Table~\ref{tab:ConvRatesPressureExp2}.

\subsection{Potential flows with different polynomial degrees}%
\label{sub:potential_flows}
As before, the third experiment is performed on the series of meshes $\mathcal{T}_0,\mathcal{T}_1,\mathcal{T}_2,\ldots$ of Figure~\ref{fig:meshes}.

The exact velocity is prescribed as a polynomial potential flow $\vec{u} = \nabla r$, i.e. the gradient of
a smooth harmonic polynomial $r \in P_{s}(\Omega)$ of degree $s$.
Then, it holds $\Delta \vec{u}= \nabla(\Delta r) = 0$ and the pressure is completely determined by the right-hand side.
To demonstrate the usefulness of the pressure-robust methods in the
Navier--Stokes setting, the right-hand side is chosen to be the convection term
\begin{align*}
  \vec{f}=(\vec{u}\cdot \nabla)\vec{u} = \nabla \left(\frac{1}{2} \lvert \vec{u} \rvert^2 \right) = \nabla p
\end{align*}
which is the gradient of a polynomial \(p := \frac{1}{2} \lvert \vec{u} \rvert^2 + C\) of degree $2(s-1)$, see e.g.\ \cite{MR3564690}.
The constant $C$ is fixed by the constraint $\int_\Omega p\,\mathrm{d}x = 0$.

As in the previous experiment, convergence rates of all methods are computed for the viscosities $\nu = 1$ and $\nu = 0.0001$.

\subsubsection{Polynomial degree \texorpdfstring{$s=2$}{s=2}}%
\label{ssub:polynomial_degree_s_1_}
The choice \(r = x^2 - y^2\) leads to the linear velocity\linebreak
 $ \vec{u}(x,y) := (2x, -2y)^T$
and the corresponding pressure and right-hand side
\begin{align*}
  p(x,y):= 2x^2+2y^2 - \frac{4}{3} ,\quad \text{and} \quad \vec{f}(x,y) =(\vec{u}\cdot
  \nabla)\vec{u}=(4x, 4y)^T.
\end{align*}

\begin{figure}[bt]
\begin{minipage}[h]{0.99\textwidth}
\centering
\begin{tikzpicture}[]
\begin{loglogaxis}[
	legend style ={legend columns=-1,at={(73,0.00)},anchor=north,},
  grid=none,
	xmin = .1, xmax = 1,
	ymin = .1, ymax = 1,
	scale = 0.01,
	axis background/.style={fill=white},
axis line style={draw=none},
ytick=\empty,
xtick=\empty,
% tick style={draw=none},
% tick label style={},label style={font=\footnotesize},
]
  \addlegendimage{black,mark=*,dashed}
\addlegendentry{CVEM$\quad$}
  \addlegendimage{blue,mark=square*,dashed}
\addlegendentry{EVEM$\quad$}
  \addlegendimage{orange,mark=triangle*,dashed,mark size=2.8pt}
\addlegendentry{PRVEM\textsubscript{1}$\quad$}
  \addlegendimage{magenta,mark=diamond*,dashed,mark size=2.8pt}
\addlegendentry{PRVEM\textsubscript{0}}

\addplot[white, mark =diamond*, mark size =0.01pt] coordinates{
    (1, 1)
  };
\end{loglogaxis}

	\end{tikzpicture}
\end{minipage}
\vfill

\begin{minipage}[h]{0.49\textwidth}
\centering
\begin{tikzpicture}[]
% \clip (-1.50,-1.1) rectangle (5.85,5.00);
\begin{loglogaxis}[
  xlabel = \texttt{ndof}, ylabel = $\Vert \nabla (\vec{u}-\Pi_2^\nabla\vec{u}_h)\Vert_{L^2}$,
	legend style ={legend pos =north east},
  grid=major,
	ymin = 1e-15, ymax = 5e3,
	% xtickten={-6,-4,...,0},
	ytickten={-16,-14,...,6},
  minor y tick style = white,
  minor x tick style = white,
	scale = 0.85,
	axis background/.style={fill=white},
]
\addplot[black, mark =*, dashed] coordinates{
	(177   , 0.053546040257895513581)
	(763   , 0.013725694607277599824)
	(3171  , 0.003597476162269952309)
	(12931 , 0.000922973026567141933)
	(52227 , 0.000233802762075492170)
	(209923, 0.000058856967276728021)
	(841731, 0.000014766713047538734)
};
\addplot[blue, mark =square*,dashed] coordinates{
	(177   , 0.000324419231041478103)
	(763   , 0.000019982159386166272)
	(3171  , 0.000001120104052281499)
	(12931 , 0.000000064489274332437)
	(52227 , 0.000000003833444199779)
	(209923, 0.000000000232952661399)
	(841731, 0.000000000015633032160)
};
\addplot[orange, mark =triangle*, mark size =2.8pt,dashed] coordinates{
	(177   , 0.000000000000005511657)
	(763   , 0.000000000000014229010)
	(3171  , 0.000000000000046282274)
	(12931 , 0.000000000000214333896)
	(52227 , 0.000000000000587901492)
	(209923, 0.000000000001390576695)
	(841731, 0.000000000006291321391)
};
\addplot[magenta, mark =diamond*, mark size =2.8pt,dashed] coordinates{
	(177   , 0.000000000000005746897)
	(763   , 0.000000000000009626778)
	(3171  , 0.000000000000048109471)
	(12931 , 0.000000000000125628363)
	(52227 , 0.000000000000188612879)
	(209923, 0.000000000001331218731)
	(841731, 0.000000000006956203077)
};
	\draw[slopetriangle]
		(axis cs: 209923,0.00001)
		-- (axis cs: 52227,4.0194e-05)
		-- (axis cs: 52227,0.00001) node [midway,left]{\scriptsize\(2\)}
		-- cycle node [midway,below]{\scriptsize\(1\)};
	\draw[slopetriangle]
		(axis cs: 209923,5e-11)
		-- (axis cs: 52227,8.0779e-10)
		-- (axis cs: 52227,5e-11) node [midway,left]{\scriptsize\(4\)}
		-- cycle node [midway,below]{\scriptsize\(1\)};
	% \legend{CVEM, EVEM, PRVEM\textsubscript{1},PRVEM\textsubscript{0}}
	\end{loglogaxis}

  \node[draw,fill=white] at (5.05,4.435) {{$\nu = 1$}};

	\end{tikzpicture}
\subcaption{moderate viscosity}
\end{minipage}
\hfill
\begin{minipage}[h]{0.49\textwidth}
\centering
\begin{tikzpicture}[]
% \clip (-1.50,-1.1) rectangle (5.85,5.00);
\begin{loglogaxis}[
  xlabel = \texttt{ndof}, ylabel = $\Vert \nabla (\vec{u}-\Pi_2^\nabla\vec{u}_h)\Vert_{L^2}$,
	legend style ={legend pos =north east},
  grid=major,
	ymin = 1e-15, ymax = 5e3,
	% xtickten={-6,-4,...,0},
	ytickten={-16,-14,...,6},
  minor y tick style = white,
  minor x tick style = white,
	scale = 0.85,
	axis background/.style={fill=white},
]
\addplot[black, mark =*, dashed] coordinates{
	(177   , 535.460402578922639804658)
	(763   , 137.256946072764776545228)
	(3171  , 35.974761622645722525249)
	(12931 , 9.229730265872984062980)
	(52227 , 2.338027621494903840471)
	(209923, 0.588569675203101216887)
	(841731, 0.147667131210844237987)
};
\addplot[blue, mark =square*,dashed] coordinates{
	(177   , 3.244192310411761148714)
	(763   , 0.199821593856388368682)
	(3171  , 0.011201040500879526005)
	(12931 , 0.000644892781389118440)
	(52227 , 0.000038334497097490282)
	(209923, 0.000002329565362218527)
	(841731, 0.000000143437689334260)
};
\addplot[orange, mark =triangle*, mark size =2.8pt,dashed] coordinates{
	(177   , 0.000000000002278929911)
	(763   , 0.000000000001425616135)
	(3171  , 0.000000000001294124924)
	(12931 , 0.000000000001142703593)
	(52227 , 0.000000000001156056430)
	(209923, 0.000000000002042417644)
	(841731, 0.000000000008942530326)
};
\addplot[magenta, mark =diamond*, mark size =2.8pt,dashed] coordinates{
	(177   , 0.000000000001453999509)
	(763   , 0.000000000001623249671)
	(3171  , 0.000000000001121609086)
	(12931 , 0.000000000000958963292)
	(52227 , 0.000000000000949740615)
	(209923, 0.000000000001747777618)
	(841731, 0.000000000009504916807)
};
	\draw[slopetriangle]
		(axis cs: 209923,4e-7)
		-- (axis cs: 52227,6.4623e-6)
		-- (axis cs: 52227,4e-7) node [midway,left]{\scriptsize\(4\)}
		-- cycle node [midway,below]{\scriptsize\(1\)};
	\draw[slopetriangle]
		(axis cs: 209923,1e-1)
		-- (axis cs: 52227,0.4019)
		-- (axis cs: 52227,1e-1) node [midway,left]{\scriptsize\(2\)}
		-- cycle node [midway,below]{\scriptsize\(1\)};
	% \legend{CVEM, EVEM, PRVEM\textsubscript{1},PRVEM\textsubscript{0}}
	\end{loglogaxis}
  \node[draw,fill=white, anchor=west] at (3.585,4.44) {{$\nu = 0.0001$}};

	\end{tikzpicture}
\subcaption{small viscosity}
\end{minipage}

\caption[]{Convergence rates of the velocity for the third experiment with a
  linear velocity and quadratic pressure for two different viscosities.}
  \label{fig:potflow1}
\end{figure}
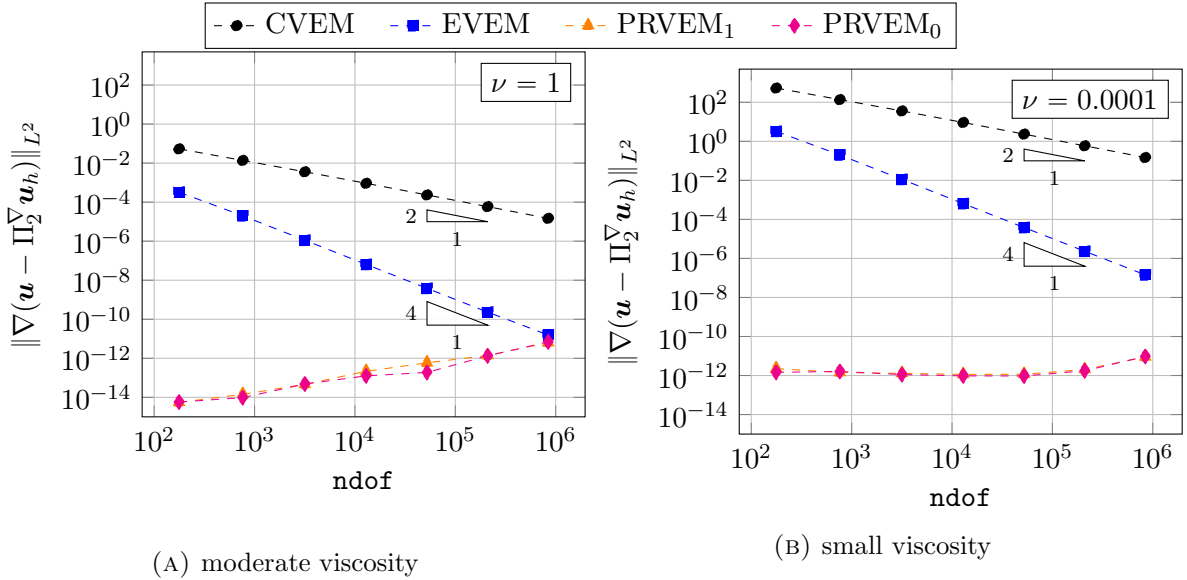

The convergence rates for all methods for the different viscosities can be found
in Figure~\ref{fig:potflow1}.
The classical method converges with its theoretically predicted order.
In this case, not only the pressure-robust versions but also the enhanced VEM can solve
the problem exact up to machine precision, since the right-hand can be
exactly approximated.

\subsubsection{Polynomial degree \texorpdfstring{$s=3$}{s=3}}%
\label{ssub:polynomial_degree_s_2_}
This time, consider $r = x^3 - 3y^2x$ and the corresponding velocity $\vec{u}:=(3x^2-3y^2, -6xy)^T$ with exact pressure and right-hand side
\begin{align*}
  p(x,y):= \frac{9}{2}(x^4+y^4)+9x^2y^2 - \frac{14}{5} ,\quad \text{and} \quad \vec{f}(x,y) =(\vec{u}\cdot
  \nabla)\vec{u}=18(x^3+xy^2, y^3+x^2y)^T
  .
\end{align*}

In Figure~\ref{fig:potflow2} the convergence rates for all methods for the
different viscosities are presented showing optimal convergence rates for the
classical and the enhanced VEM as well as the great asset of the pressure-robust
version.

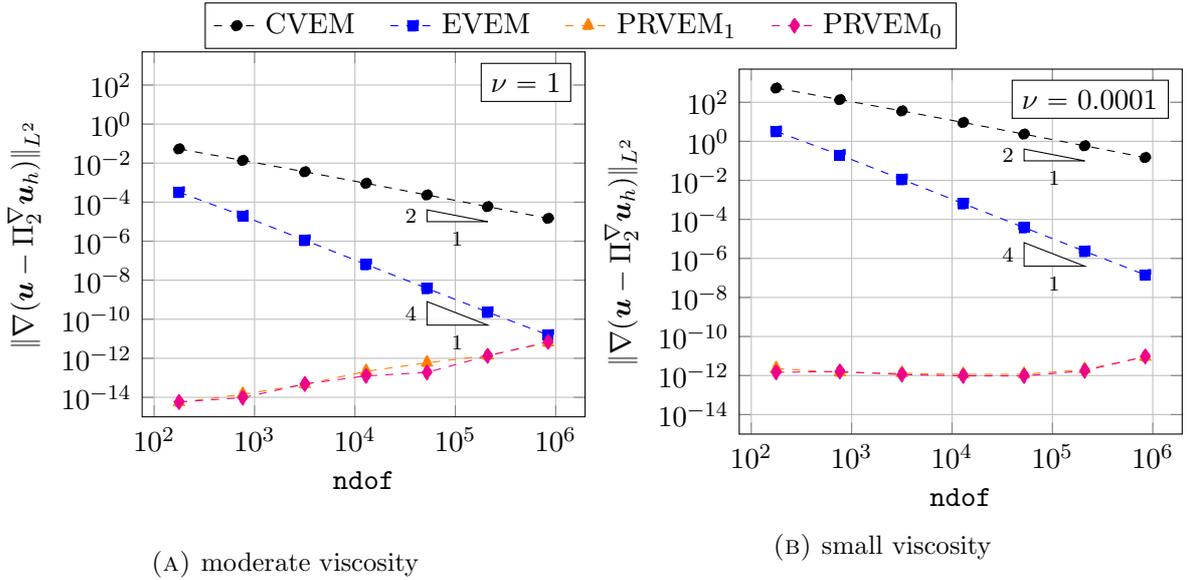
\begin{figure}[bt]
  \begin{minipage}[h]{0.99\textwidth}
\centering
\begin{tikzpicture}[]
\begin{loglogaxis}[
	legend style ={legend columns=-1,at={(73,0.00)},anchor=north,},
  grid=none,
	xmin = .1, xmax = 1,
	ymin = .1, ymax = 1,
	scale = 0.01,
	axis background/.style={fill=white},
axis line style={draw=none},
ytick=\empty,
xtick=\empty,
% tick style={draw=none},
% tick label style={},label style={font=\footnotesize},
]
  \addlegendimage{black,mark=*,dashed}
\addlegendentry{CVEM$\quad$}
  \addlegendimage{blue,mark=square*,dashed}
\addlegendentry{EVEM$\quad$}
  \addlegendimage{orange,mark=triangle*,dashed,mark size=2.8pt}
\addlegendentry{PRVEM\textsubscript{1}$\quad$}
  \addlegendimage{magenta,mark=diamond*,dashed,mark size=2.8pt}
\addlegendentry{PRVEM\textsubscript{0}}

\addplot[white, mark =diamond*, mark size =0.01pt] coordinates{
    (1, 1)
  };
\end{loglogaxis}

	\end{tikzpicture}
\end{minipage}
\vfill

\begin{minipage}[h]{0.49\textwidth}
\centering
\begin{tikzpicture}[]
% \clip (-1.50,-1.1) rectangle (5.85,5.00);
\begin{loglogaxis}[
  xlabel = \texttt{ndof}, ylabel = $\Vert \nabla (\vec{u}-\Pi_2^\nabla\vec{u}_h)\Vert_{L^2}$,
	legend style ={legend pos =north east},
  grid=major,
	ymin = 1e-15, ymax = 5e3,
	% xtickten={-6,-4,...,0},
	ytickten={-16,-14,...,6},
  minor y tick style = white,
  minor x tick style = white,
	scale = 0.85,
	axis background/.style={fill=white},
]
\addplot[black, mark =*, dashed] coordinates{
	(177   , 0.053546040257895513581)
	(763   , 0.013725694607277599824)
	(3171  , 0.003597476162269952309)
	(12931 , 0.000922973026567141933)
	(52227 , 0.000233802762075492170)
	(209923, 0.000058856967276728021)
	(841731, 0.000014766713047538734)
};
\addplot[blue, mark =square*,dashed] coordinates{
	(177   , 0.000324419231041478103)
	(763   , 0.000019982159386166272)
	(3171  , 0.000001120104052281499)
	(12931 , 0.000000064489274332437)
	(52227 , 0.000000003833444199779)
	(209923, 0.000000000232952661399)
	(841731, 0.000000000015633032160)
};
\addplot[orange, mark =triangle*, mark size =2.8pt,dashed] coordinates{
	(177   , 0.000000000000005511657)
	(763   , 0.000000000000014229010)
	(3171  , 0.000000000000046282274)
	(12931 , 0.000000000000214333896)
	(52227 , 0.000000000000587901492)
	(209923, 0.000000000001390576695)
	(841731, 0.000000000006291321391)
};
\addplot[magenta, mark =diamond*, mark size =2.8pt,dashed] coordinates{
	(177   , 0.000000000000005746897)
	(763   , 0.000000000000009626778)
	(3171  , 0.000000000000048109471)
	(12931 , 0.000000000000125628363)
	(52227 , 0.000000000000188612879)
	(209923, 0.000000000001331218731)
	(841731, 0.000000000006956203077)
};
	\draw[slopetriangle]
		(axis cs: 209923,0.00001)
		-- (axis cs: 52227,4.0194e-05)
		-- (axis cs: 52227,0.00001) node [midway,left]{\scriptsize\(2\)}
		-- cycle node [midway,below]{\scriptsize\(1\)};
	\draw[slopetriangle]
		(axis cs: 209923,5e-11)
		-- (axis cs: 52227,8.0779e-10)
		-- (axis cs: 52227,5e-11) node [midway,left]{\scriptsize\(4\)}
		-- cycle node [midway,below]{\scriptsize\(1\)};
	% \legend{CVEM, EVEM, PRVEM\textsubscript{1},PRVEM\textsubscript{0}}
	\end{loglogaxis}

  \node[draw,fill=white] at (5.05,4.435) {{$\nu = 1$}};

	\end{tikzpicture}
\subcaption{moderate viscosity}
\end{minipage}
\hfill
\begin{minipage}[h]{0.49\textwidth}
\centering
\begin{tikzpicture}[]
% \clip (-1.50,-1.1) rectangle (5.85,5.00);
\begin{loglogaxis}[
  xlabel = \texttt{ndof}, ylabel = $\Vert \nabla (\vec{u}-\Pi_2^\nabla\vec{u}_h)\Vert_{L^2}$,
	legend style ={legend pos =north east},
  grid=major,
	ymin = 1e-15, ymax = 5e3,
	% xtickten={-6,-4,...,0},
	ytickten={-16,-14,...,6},
  minor y tick style = white,
  minor x tick style = white,
	scale = 0.85,
	axis background/.style={fill=white},
]
\addplot[black, mark =*, dashed] coordinates{
	(177   , 535.460402578922639804658)
	(763   , 137.256946072764776545228)
	(3171  , 35.974761622645722525249)
	(12931 , 9.229730265872984062980)
	(52227 , 2.338027621494903840471)
	(209923, 0.588569675203101216887)
	(841731, 0.147667131210844237987)
};
\addplot[blue, mark =square*,dashed] coordinates{
	(177   , 3.244192310411761148714)
	(763   , 0.199821593856388368682)
	(3171  , 0.011201040500879526005)
	(12931 , 0.000644892781389118440)
	(52227 , 0.000038334497097490282)
	(209923, 0.000002329565362218527)
	(841731, 0.000000143437689334260)
};
\addplot[orange, mark =triangle*, mark size =2.8pt,dashed] coordinates{
	(177   , 0.000000000002278929911)
	(763   , 0.000000000001425616135)
	(3171  , 0.000000000001294124924)
	(12931 , 0.000000000001142703593)
	(52227 , 0.000000000001156056430)
	(209923, 0.000000000002042417644)
	(841731, 0.000000000008942530326)
};
\addplot[magenta, mark =diamond*, mark size =2.8pt,dashed] coordinates{
	(177   , 0.000000000001453999509)
	(763   , 0.000000000001623249671)
	(3171  , 0.000000000001121609086)
	(12931 , 0.000000000000958963292)
	(52227 , 0.000000000000949740615)
	(209923, 0.000000000001747777618)
	(841731, 0.000000000009504916807)
};
	\draw[slopetriangle]
		(axis cs: 209923,4e-7)
		-- (axis cs: 52227,6.4623e-6)
		-- (axis cs: 52227,4e-7) node [midway,left]{\scriptsize\(4\)}
		-- cycle node [midway,below]{\scriptsize\(1\)};
	\draw[slopetriangle]
		(axis cs: 209923,1e-1)
		-- (axis cs: 52227,0.4019)
		-- (axis cs: 52227,1e-1) node [midway,left]{\scriptsize\(2\)}
		-- cycle node [midway,below]{\scriptsize\(1\)};
	% \legend{CVEM, EVEM, PRVEM\textsubscript{1},PRVEM\textsubscript{0}}
	\end{loglogaxis}
  \node[draw,fill=white, anchor=west] at (3.585,4.44) {{$\nu = 0.0001$}};

	\end{tikzpicture}
\subcaption{small viscosity}
\end{minipage}

\caption[]{Convergence rates of the velocity for the third experiment with a
  quadratic velocity and quartic pressure for two different viscosities.}
  \label{fig:potflow2}
\end{figure}

On the other hand, this problem indicates also an advantage of
pressure-robust methods for the Navier-Stokes setting which is shortly addressed in the outlook.

\section{Outlook} \label{sec:Outlook}
This Section discusses several straight-forward extensions of the presented idea.

\subsection{Divergence-free postprocessing}
The quantity \(\Pi_k^\nabla \vec{u}_h\) is in general not divergence-free, but often used as a postprocessing to have some quantity that can be evaluated everywhere.
The reconstruction operator \(I_{\mathrm{RT}_{k-1}} \vec{u}_h\) can serve as an alternative divergence-free postprocessing of the discrete solution \(\vec{u}_h\). This might be of importance in coupled multiphysics problems to preserve structural properties like mass conservation \cite{MR3683678}.

\subsection{Extension to Navier--Stokes}
In the spirit of \cite{MR3683678,MR3564690,MR3743746}, the reconstruction operator can be also applied in the virtual element discretisation of the Navier--Stokes equations \cite{MR3796371}. Then, it appears not only in the right-hand side but also in the material derivative, i.e.\ time derivative or the nonlinear convection term. A modified computable discrete convection form might read
\begin{align*}
  c_h^+(\vec{w}_h,\vec{u}_h,\vec{v}_h)
  := \int_\Omega (I_{\mathrm{RT}_{k-1}}(\vec{w}_h) \cdot \vec{\pi}_{k-1} \nabla
  \vec{u}_h) I_{\mathrm{RT}_1}(\vec{v}_h) \, \mathrm{d}x
\end{align*}
and a modified discretisation of the time derivative is given by
\begin{align*}
  d_h^+(\vec{u}_h,\vec{v}_h)
  := \int_\Omega \frac{\mathrm{d}}{\mathrm{d}t} I_{\mathrm{RT}_{k-1}}(\vec{u}_h) \cdot
  I_{\mathrm{RT}_{k-1}}(\vec{v}_h) \, \mathrm{d}x
\end{align*}
where \(\frac{\mathrm{d}}{\mathrm{d}t}\) can be replaced by any discrete time stepping scheme.

In fact, as demonstrated in \cite{gauger:linke:schroeder:2019} for high Reynolds number flows, there are situations where the material derivative \(\vec{u}_t + \vec{u} \cdot\nabla \vec{u}\) is (close to) a gradient (in particular for \(\vec{f} = 0\) and \(\nu \rightarrow 0\)) of a possibly non-trivial pressure. Then, a discretisation of the terms in the material derivative based on the divergence-preserving reconstruction operator will be a better choice.

\subsection{Extension to other discretisation schemes on polygonal or polyhedral meshes}
In principle, a similar design of a reconstruction operator is possible for any
discretisation on polygonal or polyhedral meshes as long as there is
a discretely divergence-free constraint that is satisfied exactly. One example
on simplicial meshes can be found in \cite{MR3502564} for a discontinuous
skeletal method, where also Raviart--Thomas elements are used for a divergence-preserving reconstruction. In \cite{piatkowski2019highorder} a similar divergence-preserving postprocessing is used in a projection step of a splitting scheme. Those methods are extendable to general meshes \cite{DIPIETRO20151} and can then be reconstructed or postprocessed with the subgrid strategy presented here.

%-- BIBLIOGRAPHY ------------------------------------------
\bibliographystyle{amsplain}
\bibliography{lit}

\end{document}